\newtheorem{theorem}{Theorem}[section]
\newtheorem{definition}[theorem]{Definition}
\newtheorem{example}[theorem]{Example}
\newtheorem{lemma}[theorem]{Lemma}
\begin{document}

\title[Unicity Concepts]{Unicity Concepts for Sudoku}

\author{Thomas Fischer}
\address{Fuchstanzstr. 20, 60489 Frankfurt am Main, Germany.}
%\address{Johann Wolfgang Goethe-Universit\"{a}t, Fachbereich Mathematik, \newline
% Robert-Mayer-Str. 6 - 10, 60325 Frankfurt am Main, Germany.}

\email{dr.thomas.fischer@gmx.de}

\date{}     % no date

%\dedicatory{Dedicated to}

\begin{abstract}
This paper deals with a generalized Sudoku problem and investigates the unicity of a given 
solution. We introduce constraint sets, which is a generalization of the rows, columns and blocks 
of a classical Sudoku puzzle. The unicity property is characterized by three different properties.
We describe unicity by permutations, by unicity cells and by rectangles. These terms are defined 
in this paper and are illustrated with examples. Throughout this paper we are not concerned with 
the existence of a solution.
\end{abstract}

\keywords{Sudoku, Combinatorics, Unicity.}

% AMS subject classification 
\subjclass[2010]{Primary 90C35; Secondary 05B15 65K10}
% Primary: 	Programming involving graphs or networks
% Secondary: Orthogonal arrays, Latin squares, Room squares
%                 Optimization and variational techniques

% ACM Classification
%   G.2.1 Combinatorics
%   G.1.6 Optimization

\maketitle

                                        %
                                        % Section 1
                                        %
\section{Introduction} \label{S:intro} % used as reference
The consideration of unicity properties is motivated by two perceptions. Usually, in a well-posed Sudoku 
it is required that there exists exactly one solution, i.e., there exists a solution and the solution is unique. 
This assumption makes it reasonable to investigate this property in order to gain additional insight, which 
can be used in the development of algorithms. On the other side in optimization and approximation theory 
it is a common approach to impose a unicity condition in order to obtain stronger results. A classical 
example is the alternation theorem for Haar subspaces in Chebyshev approximation. Haar subspaces 
are a global unicity assumption and the corresponding theory can be found, e.g., in the monograph of 
Cheney \cite{Che}.

This paper introduces three different types of unicity properties. In Section \ref{S:constraint_sets}
constraint sets are defined. These sets generalize the terms row, column and block of a classical 
Sudoku puzzle and prepare the unicity statements. In Section \ref{S:unicity_perm} we describe the 
solution set of the generalized Sudoku problem in terms of permutations. These permutations 
have specific properties on the constraint sets. The unique solvability depends on the nonexistence 
of such a permutation.

In Section \ref{S:unicity_cells} we introduce unicity cells, which is a local unicity condition. As a result
we prove that the generalized Sudoku problem is uniquely solvable if and only if each cell is a unicity 
cell. In Section \ref{S:unicity_rectangles} we consider a generalized type of rectangles as a set of 
cells. If the generalized Sudoku problem admits a unique solution with a minimality condition on a 
rectangle, then this rectangle contains a given, i.e., a populated cell.

Unicity properties for the Sudoku puzzle had been considered by Provan \cite{Pro} and Herzberg and 
Murty \cite{HM}. Provan showed that the complete solvability of a Sudoku puzzle using the pigeon-hole 
rule implies the unique solvability of the Sudoku puzzle. Herzberg and Murty presented a necessary
condition for the unique solvability.

Finally, we collect some basic terms and notations. Let $\mathds{Z}$ denote the set of integers. 
The $s$-times cartesian product of $\mathds{Z}$ is indicated by a superscript $s$, i.e.,
$\mathds{Z}^s$. The transpose of a vector or a matrix is indicated by a superscript $”T”$.
The vectors $\mathbf{0}$ respectively $\mathbf{1}$ denote the zero respectively one vector, 
consisting of zeros respectively ones in each component. The number of components is indicated 
by an index. Each vector is considered to be a column vector. $U$ denotes the identity matrix. 
The elements of a set are called distinct if each two elements of the set are distinct, i.e.,
if and only if the elements are pairwise distinct. The sum over an empty index set is considered 
to be zero. The symbol $\sharp$ denotes the number of elements (cardinality) of a finite set.

                                        %
                                        % Section 2
                                        %
\section{The Mathematical Model} \label{S:model}
We treat Sudoku problems on the basis of a general model introduced in \cite{Fis1} and replicate the
model in this section. We restrict ourselves to the primal problem and do not consider the dual problem
introduced in \cite{Fis2}. Let $n$ be an integer with $n \ge 1$. We define the sum
%\begin{equation}   \label{E:firstInt}     % numbered and labelled
\[                                                         % display style math
s(n)  = \sum_{i=1}^{n-1}i
\]
%\end{equation}
and define a matrix $A(n)$ with $s(n)$ rows and $n$ columns inductively. For $n=1$, let $A(1)$ 
denote the empty matrix, i.e., a matrix without entries. Assume the matrix $A(n-1)$ had been 
defined with $s(n-1)$ rows and $n-1$ columns. We set
{
\renewcommand{\arraystretch}{2.0}     % bigger line distance
\[
A(n) = 
\begin{pmatrix}    
\begin{array}{c|c}
\mathbf{1}_{n-1} & - U_{n-1} \\
\hline
\mathbf{0}_{s(n-1)} & A(n-1)
\end{array}
\end{pmatrix}.
\]
}

We extend the matrix $A(n)$ to a matrix $A$ with $n \cdot s(n)$ rows and $n^2$ columns. 
The matrix $A$ consists in the ``main diagonal" of $n$ matrices $A(n)$ and the remaining values 
are set to zero. The matrix $A$ depends on the value $n$, but we do not state this 
dependence explicitly.

Given the set $\{1, \ldots, n^2\} \subset \mathds{Z}$, let $\pi$ be any permutation on this set, 
i.e.,
\[
\pi: \{1, \ldots, n^2\} \longrightarrow \{1, \ldots, n^2\}
\]
be a permutation. We extend the notion of permutation to the matrix $A$,  
i.e., we define $\pi (A) = (a^{\pi^{-1}(1)}, \ldots, a^{\pi^{-1}(n^2)})$, 
where $a^j$ denotes the $j^{th}$ column of $A$ for $j = 1, \ldots , n^2$. 
Given a permutation $\pi$ on $\{1, \ldots, n^2\}$, we define the matrix 
$A_\pi = \pi (A)$, i.e., we interchange the columns of $A$ according to 
the permutation $\pi$. 

\begin{definition} \label{D:nonzero}
Let $s \ge 1$. For any point $y = (y_1, \ldots, y_s)^T \in \mathds{Z}^s$, 
we write $y < > \mathbf{0}$ if each component of $y$ is nonzero, 
i.e., if $y_i \ne 0$ for $i=1, \ldots, s$.
\end{definition}

This definition should not be confused with the expression $y \ne \mathbf{0}$, 
where only one component of $y$ has to be nonzero.

Given is $n \ge 2$, some permutations $\pi_1$, $\pi_2$, $\pi_3$ on $\{1, \ldots, n^2\}$, 
some $0 \le k \le n^2$, an index set $\{i_1, \ldots, i_k \} \subset \{ 1, \ldots, n^2\}$, 
and givens $g_{i_1}, \ldots , g_{i_k} \in \mathds{Z}$ with $1 \le g_{i_l} \le n$ for 
$l = 1, \ldots, k$.

The generalized Sudoku problem is defined in the following way:
\begin{align*}
& \mbox{Find }x = (x_1, \ldots, x_{n^2})^T \in \mathds{Z}^{n^2}, \mbox{ such that }\\
& 1 \le x_i \le n \mbox{ for } i = 1, \ldots, n^2, \\
& A_{\pi_r} x <> \mathbf{0} \mbox{ for }r=1, 2, 3 \mbox{ and } \\
& x_{i_l} = g_{i_l} \mbox{ for }l = 1, \ldots, k.
\end{align*}

We restrict ourselves to this mathematical model and do not refer directly to the classical Sudoku 
puzzle. In particular, we will not investigate the relation of this model to the Sudoku puzzle
in detail. This had been described in \cite{Fis1} already.

                                        %
                                        % Section 3
                                        % 
\section{Constraint Sets} \label{S:constraint_sets}
From now on we denote the elements of  $\{1, \ldots, n^2\}$ in the definition of the generalized 
Sudoku problem as cells. We divide the set of all cells into $n$ subsets of $n$ cells. 

\begin{definition} \label{D:constraintsets}
Let $\pi$ be a permutation on $\{1, \ldots, n^2\}$. The $n$ sets 
$\{\pi((j-1)\cdot n + 1), \ldots, \pi(j \cdot n)\} = \{\pi(i) \mid (j-1)\cdot n + 1 \le i \le j \cdot n\}$
for $j = 1, \ldots, n$ are called the constraint sets of $\pi$ and are denoted by 
$cs_\pi(1), \ldots, cs_\pi(n)$.
\end{definition}

Each of the sets $cs_\pi(j)$ contains exactly $n$ elements. If the generalized Sudoku problem
describes a classical Sudoku puzzle (as defined in \cite[Section 3]{Fis1}), then the constraint 
sets of  $\pi_1$, $\pi_2$ respectively $\pi_3$ describe the indices of the rows, columns 
respectively blocks of the Sudoku square. 

\begin{lemma} \label{L:31}
Let $\pi$ be a permutation on $\{1, \ldots, n^2\}$. The constraint sets of $\pi$ form 
a partition of the cells $\{1, \ldots, n^2\}$, i.e., 
\begin{enumerate}[(i)] 
\item $cs_\pi(i) \cap cs_\pi(j) = \emptyset$ for $i, j = 1, \ldots, n, i \ne j$, and 
\item $\bigcup^n_{j=1} cs_\pi(j) = \{1, \ldots, n^2\}$.
\end{enumerate}
\end{lemma}
\begin{proof}
(i) This is clear, because $\pi$ is a one-to-one mapping.  \\
(ii) Obviously, $cs_\pi(j) \subset  \{1, \ldots, n^2\}$ for $j=1, \ldots, n$. The inclusion 
$\{1, \ldots, n^2\} \subset  \bigcup^n_{j=1} cs_\pi(j)$ follows, because $\pi$ is onto.
\end{proof}

Let $\pi$ be a permutation on  $\{1, \ldots, n^2\}$  and let $x = (x_1, \ldots, x_{n^2})^T
\in \mathds{Z}^{n^2}$. 
We adopt a notation of \cite[Section 2]{Fis1} and define
\[
\pi(x) = (x_{\pi^{-1}(1)}, \ldots, x_{\pi^{-1}(n^2)})^T.
\]

The proof of the next lemma follows immediately from \cite[Lemma 3.2]{Fis1}.

\begin{lemma} \label{L:32}
Let $\pi$ be a permutation on $\{1, \ldots, n^2\}$, let $j \in \{1, \ldots, n\}$ and let 
$x = (x_1, \ldots, x_{n^2})^T \in \mathds{Z}^{n^2}$. 
The following statements are equivalent: 
\begin{enumerate}[(i)] 
\item $A(n) (x_{\pi((j-1)n+1)}, \ldots, x_{\pi(j \cdot n)}) < > \mathbf{0}$.  
\item $x_{\pi((j-1)n+1)}, \ldots, x_{\pi(j \cdot n)}$ are distinct.
\end{enumerate}
\end{lemma}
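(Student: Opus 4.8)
The plan is to reduce both statements to an explicit description of the rows of $A(n)$. I claim that the rows of $A(n)$ are precisely the $\binom{n}{2} = s(n)$ vectors $e_a - e_b$ with $1 \le a < b \le n$, where $e_a \in \mathds{Z}^n$ denotes the $a$-th standard unit vector. Granting this, for $y = (y_1, \ldots, y_n)^T := (x_{\pi((j-1)n+1)}, \ldots, x_{\pi(j \cdot n)})^T$ the product $A(n)y$ is exactly the list of all pairwise differences $y_a - y_b$ with $a < b$. By Definition~\ref{D:nonzero}, statement (i) says that every such difference is nonzero, i.e., $y_a \ne y_b$ whenever $a \ne b$, which is precisely statement (ii). So the equivalence is immediate once the row description is established.

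To prove the row description I would use induction on $n$, mirroring the inductive definition of $A(n)$. For $n = 1$ the matrix $A(1)$ is empty and there are no pairs $a < b$, so the claim holds vacuously. For the inductive step I would read off the two blocks separately. The top block $[\mathbf{1}_{n-1} \mid -U_{n-1}]$ has as its $i$-th row the vector with a $1$ in column $1$ and a $-1$ in column $i+1$, that is $e_1 - e_{i+1}$; as $i$ runs over $1, \ldots, n-1$ these are exactly the pairs $(1, b)$ with $2 \le b \le n$. The bottom block $[\mathbf{0}_{s(n-1)} \mid A(n-1)]$ has a zero first column, so each of its rows acts only on $(y_2, \ldots, y_n)$; by the induction hypothesis applied to $A(n-1)$ these rows realize exactly the differences $y_a - y_b$ with $2 \le a < b \le n$. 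The union of the two index families $\{(1,b) : 2 \le b \le n\}$ and $\{(a,b) : 2 \le a < b \le n\}$ is the set of all pairs with $1 \le a < b \le n$, disjointly, which yields the claim.

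The only real bookkeeping obstacle is keeping the column indices consistent under the shift introduced by the leading zero column of the bottom block: the induction hypothesis describes $A(n-1)$ in terms of indices $1, \ldots, n-1$, and these must be relabelled as $2, \ldots, n$ when embedded into $A(n)$. Once this shift is tracked carefully, the disjoint union of pair families is routine, and the count $s(n) = (n-1) + s(n-1) = \binom{n}{2}$ confirms that no pairs are missing or repeated. Alternatively, as the author indicates, the whole statement can be quoted directly from \cite[Lemma 3.2]{Fis1}, where this row description of $A(n)$ is established.
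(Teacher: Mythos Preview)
Your argument is correct: the inductive row description of $A(n)$ as the collection of vectors $e_a - e_b$ for $1 \le a < b \le n$ is exactly what makes the equivalence immediate, and your handling of the index shift in the bottom block is accurate. The paper itself gives no proof here beyond the citation to \cite[Lemma~3.2]{Fis1}, so you have effectively reconstructed the content of that cited result rather than taking a different route.
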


From \cite[Lemma 3.3]{Fis1}, we derive this lemma.

\begin{lemma} \label{L:33}
Let $\pi$ be a permutation on $\{1, \ldots, n^2\}$, let $j \in \{1, \ldots, n\}$ and let 
$x = (x_1, \ldots, x_{n^2})^T \in \mathds{Z}^{n^2}$. 
The following statements are equivalent: 
\begin{enumerate}[(i)] 
\item $A(n) (x_{\pi((j-1)n+1)}, \ldots, x_{\pi(j \cdot n)}) < > \mathbf{0}$  \\
and $1 \le x_{\pi((j-1)n+i)}  \le n$ for $i=1, \ldots, n$.  
\item $\{x_i \mid i \in cs_\pi(j) \} = \{ 1, \ldots, n \}$.
\end{enumerate}
\end{lemma}

The results on the subvectors of $x$ can be extended to the full length.

\begin{lemma} \label{L:34}
Let $\pi$ be a permutation on $\{1, \ldots, n^2\}$ and let 
$x = (x_1, \ldots, x_{n^2})^T \in \mathds{Z}^{n^2}$. 
The following statements are equivalent: 
\begin{enumerate}[(i)] 
\item $A_\pi x < > \mathbf{0}$.  
\item $x_{\pi((j-1)n+1)}, \ldots, x_{\pi(j \cdot n)}$ are distinct for $j = 1, \ldots, n$.
\end{enumerate}
\end{lemma}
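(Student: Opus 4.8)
The plan is to reduce the full-length equivalence to the blockwise equivalence already supplied by Lemma \ref{L:32}, exploiting the fact that $A$ is block diagonal with $n$ copies of $A(n)$ on its main diagonal. The key observation is that permuting the columns of $A$ by $\pi$ and then applying $x$ has the effect of feeding the reindexed entries $x_{\pi(1)}, \ldots, x_{\pi(n^2)}$ into the original block structure of $A$, so that each block of $A_\pi x$ is precisely the kind of product treated in Lemma \ref{L:32}.

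First I would record the elementary identity $A_\pi x = A y$, where $y = (x_{\pi(1)}, \ldots, x_{\pi(n^2)})^T$. This follows by writing $A_\pi x = \sum_{i=1}^{n^2} a^{\pi^{-1}(i)} x_i$ from the definition $A_\pi = (a^{\pi^{-1}(1)}, \ldots, a^{\pi^{-1}(n^2)})$, and then substituting $j = \pi^{-1}(i)$ to obtain $\sum_{j=1}^{n^2} a^j x_{\pi(j)} = A y$. Next I would decompose $A y$ according to the block structure: since $A$ carries the $j$-th copy of $A(n)$ in the rows $(j-1)s(n)+1, \ldots, j \cdot s(n)$ and the columns $(j-1)n+1, \ldots, j \cdot n$, with all off-diagonal blocks equal to zero, the $j$-th block of rows of $A y$ equals $A(n)(y_{(j-1)n+1}, \ldots, y_{j \cdot n})^T = A(n)(x_{\pi((j-1)n+1)}, \ldots, x_{\pi(j \cdot n)})^T$.

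Finally, I would observe that a vector satisfies $<> \mathbf{0}$ if and only if every one of its blocks does, because the relation $<> \mathbf{0}$ merely asserts that each component is nonzero. Hence $A_\pi x <> \mathbf{0}$ holds if and only if $A(n)(x_{\pi((j-1)n+1)}, \ldots, x_{\pi(j \cdot n)}) <> \mathbf{0}$ for every $j = 1, \ldots, n$, and an application of Lemma \ref{L:32} to each $j$ converts these $n$ conditions into the distinctness of $x_{\pi((j-1)n+1)}, \ldots, x_{\pi(j \cdot n)}$ for each $j$, which is statement (ii). I expect the only real obstacle to be the bookkeeping in the first two steps, namely tracking the column permutation through the definition of $A_\pi$ and correctly matching the row-blocks of $A$ with its column-blocks; once that identification is in place, the concluding equivalence is immediate from Lemma \ref{L:32}.
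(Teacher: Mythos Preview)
Your proposal is correct and follows essentially the same route as the paper: establish $A_\pi x = A(x_{\pi(1)},\ldots,x_{\pi(n^2)})^T$, invoke the block-diagonal structure of $A$, and then apply Lemma~\ref{L:32} to each block. The only cosmetic difference is that the paper obtains the identity $A_\pi x = A\pi^{-1}(x)$ by citing \cite[Lemma~3.4~(i)]{Fis1}, whereas you derive it directly from the column-permutation definition of $A_\pi$.
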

\begin{proof}
Using \cite[Lemma 3.4 (i)]{Fis1},
\[
A_\pi x = A \pi^{-1}(x) = A (x_{\pi(1)}, \ldots, x_{\pi(n^2)})^T.
\]
The claim follows from the diagonal structure of $A$ and Lemma \ref{L:32}.
\end{proof}

From Lemma \ref{L:34} and \cite[Lemma 3.1]{Fis1}, we derive immediately the next lemma.

\begin{lemma} \label{L:35}
Let $\pi$ be a permutation on $\{1, \ldots, n^2\}$ and let 
$x = (x_1, \ldots, x_{n^2})^T \in \mathds{Z}^{n^2}$. 
The following statements are equivalent: 
\begin{enumerate}[(i)] 
\item $A_\pi x < > \mathbf{0}$ and $1 \le x_i \le n$ for $i = 1, \ldots, n^2$.  
\item $\{ x_i \mid i \in cs_\pi(j) \} = \{1, \ldots, n \}$ for $j = 1, \ldots, n$.
\end{enumerate}
\end{lemma}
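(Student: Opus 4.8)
The plan is to reduce Lemma~\ref{L:35} to the two building blocks already established, namely Lemma~\ref{L:34} and Lemma~\ref{L:33}, rather than to argue from the matrix $A$ directly. The statement I must prove is a biconditional that glues together two conditions: the nonzero condition $A_\pi x <> \mathbf{0}$ and the range condition $1 \le x_i \le n$ for all $i$. The right-hand side collapses both of these into the single assertion that, on each constraint set $cs_\pi(j)$, the coordinates of $x$ take precisely the values $\{1, \ldots, n\}$. So the architecture of the proof is: peel the global nonzero condition into its $n$ local pieces via Lemma~\ref{L:34}, reinterpret the global range condition as the conjunction of $n$ local range conditions, and then apply Lemma~\ref{L:33} coordinatewise to each $j$.

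First I would assume (i) and derive (ii). From Lemma~\ref{L:34}, the condition $A_\pi x <> \mathbf{0}$ is equivalent to the statement that $x_{\pi((j-1)n+1)}, \ldots, x_{\pi(j \cdot n)}$ are distinct for every $j = 1, \ldots, n$; by Lemma~\ref{L:32} this is in turn equivalent, for each fixed $j$, to $A(n)(x_{\pi((j-1)n+1)}, \ldots, x_{\pi(j \cdot n)}) <> \mathbf{0}$. The global range condition $1 \le x_i \le n$ for $i = 1, \ldots, n^2$ trivially implies the local range condition $1 \le x_{\pi((j-1)n+i)} \le n$ for $i = 1, \ldots, n$ and each $j$, since $\pi$ is a bijection of $\{1, \ldots, n^2\}$. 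Thus for each $j$ the hypothesis (i) of Lemma~\ref{L:33} holds, and its conclusion gives $\{x_i \mid i \in cs_\pi(j)\} = \{1, \ldots, n\}$ for every $j$, which is exactly (ii).

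For the converse, I would assume (ii) and read the chain of equivalences backwards. Fixing $j$, the identity $\{x_i \mid i \in cs_\pi(j)\} = \{1, \ldots, n\}$ is statement (ii) of Lemma~\ref{L:33}, so its statement (i) holds: the local nonzero condition together with the local range condition $1 \le x_{\pi((j-1)n+i)} \le n$. Since $cs_\pi(j)$ ranges over all of $\{1, \ldots, n^2\}$ as $j$ runs from $1$ to $n$ (this is precisely the partition established in Lemma~\ref{L:31}), the union of these local range conditions is the global condition $1 \le x_i \le n$ for all $i$. Meanwhile the local nonzero conditions, holding for every $j$, reassemble via Lemma~\ref{L:34} (using Lemma~\ref{L:32} in the reverse direction) into $A_\pi x <> \mathbf{0}$, giving (i).

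I do not anticipate a genuine obstacle here: the lemma is essentially a bookkeeping statement that combines a previously-proven characterization of the nonzero condition with a previously-proven characterization of the nonzero-plus-range condition on a single constraint set. The only point requiring a little care is that the range condition is stated globally in (i) but locally inside Lemma~\ref{L:33}, so I must invoke the partition property from Lemma~\ref{L:31} to confirm that the coordinatewise range constraints over all $cs_\pi(j)$ cover every index exactly once and hence are equivalent to the single global range constraint. Given that the excerpt itself remarks that this lemma follows by combining Lemma~\ref{L:34} with \cite[Lemma 3.1]{Fis1}, the expected proof is short and the main work is simply stating the equivalences in the correct order.
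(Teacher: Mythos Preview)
Your proposal is correct and follows essentially the same architecture as the paper: decompose the global condition $A_\pi x <> \mathbf{0}$ into its $n$ local pieces via Lemma~\ref{L:34}, then handle each constraint set separately. The only difference is cosmetic: the paper cites the external \cite[Lemma~3.1]{Fis1} for the local step (presumably the pigeonhole observation that $n$ distinct values in $\{1,\ldots,n\}$ exhaust the set), whereas you route through Lemmas~\ref{L:32} and~\ref{L:33} of the present paper, which encapsulate the same content and are themselves derived from \cite{Fis1}; your explicit appeal to Lemma~\ref{L:31} to reassemble the range condition is a detail the paper leaves implicit.
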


In the sequel we use from time to time the notation $cs_\pi$ respectively $cs$ for constraint sets,
i.e., we do not number them, but use an abbreviated version.

                                        %
                                        % Section 4
                                        % 
\section{Unicity by Permutations} \label{S:unicity_perm}
In this section, we consider the unique solvability of the generalized Sudoku problem
introduced in Section \ref{S:model} and we focus on a description of unicity using the 
dependance on permutations of $\{1, \ldots, n^2\}$.

\begin{definition} \label{D:pi-consistent}
Let $\pi$ be a permutation on $\{1, \ldots, n^2\}$. A permutation $\tau$ on
$\{1, \ldots, n^2\}$ is called $\pi$-consistent if $\tau(cs_\pi(j)) = cs_\pi(j)$ for 
$j = 1, \ldots, n$.
\end{definition}

A permutation is called $\pi$-consistent if it preserves the constraint sets of $\pi$.
In a classical Sudoku puzzle this definition states that $\tau$ permutes only values within 
a row, a column or a block (depending on $\pi$). 

Please note, $\tau$ is $\pi$-consistent if and only if $\tau^{-1}$ is $\pi$-consistent. 
The concatenation $\pi \circ \tau^{-1}$ of two permutations $\pi$ and $\tau^{-1}$ 
is a permutation again.

\begin{lemma} \label{L:41}
Let $\pi$ and $\tau$ be permutations on $\{1, \ldots, n^2\}$ and let 
$x = (x_1, \ldots, x_{n^2})^T \in \mathds{Z}^{n^2}$. The following 
statements are equivalent: 
\begin{enumerate}[(i)] 
\item $A_\pi \tau(x) <> \mathbf{0}$. 
\item $x_{(\pi\circ\tau^{-1})((j-1) n+1)}, \ldots, x_{(\pi\circ\tau^{-1})(j \cdot n)}$ 
are distinct for $j = 1, \ldots, n.$ 
\end{enumerate}
\end{lemma}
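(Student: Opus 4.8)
The plan is to reduce Lemma~\ref{L:41} to the already-established Lemma~\ref{L:34} by recognizing that applying $A_\pi$ to $\tau(x)$ is the same as applying $A_{\pi\circ\tau^{-1}}$ to $x$. The key observation is that permuting the entries of a vector and permuting the columns of the matrix are compatible operations: the notation $\tau(x) = (x_{\tau^{-1}(1)}, \ldots, x_{\tau^{-1}(n^2)})^T$ relabels components, and this relabeling can be absorbed into the permutation defining the matrix. First I would write out $A_\pi \tau(x)$ explicitly and use the definition $A_\pi = \pi(A)$ together with the already-proved identity (from the proof of Lemma~\ref{L:34}, via \cite[Lemma 3.4 (i)]{Fis1}) that $A_\pi z = A\,\pi^{-1}(z)$ for any vector $z$. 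Setting $z = \tau(x)$ gives $A_\pi \tau(x) = A\,\pi^{-1}(\tau(x))$.

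The central step is then to verify the composition rule $\pi^{-1}(\tau(x)) = (\pi\circ\tau^{-1})^{-1}(x)$, equivalently $(\tau\circ\pi^{-1})$ applied as index maps composes correctly. Concretely, $\pi^{-1}(\tau(x))$ has $i$-th component $(\tau(x))_{\pi(i)} = x_{\tau^{-1}(\pi(i))} = x_{(\tau^{-1}\circ\pi)(i)}$, and since $(\pi\circ\tau^{-1})^{-1} = \tau\circ\pi^{-1}$, I would confirm that this matches $A_{\pi\circ\tau^{-1}}x = A\,(\pi\circ\tau^{-1})^{-1}(x)$. Thus $A_\pi\tau(x) = A_{\sigma}x$ where $\sigma = \pi\circ\tau^{-1}$, which is itself a permutation on $\{1,\ldots,n^2\}$ as noted in the remark following Definition~\ref{D:pi-consistent}.

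Once this identification is made, the proof is essentially finished: I would apply Lemma~\ref{L:34} with the permutation $\sigma = \pi\circ\tau^{-1}$ in place of $\pi$. Statement (i) of Lemma~\ref{L:34} becomes $A_\sigma x <> \mathbf{0}$, which is exactly our reformulated condition (i), and statement (ii) becomes that $x_{\sigma((j-1)n+1)}, \ldots, x_{\sigma(j\cdot n)}$ are distinct for $j=1,\ldots,n$, which is precisely condition (ii) of the present lemma written with $\sigma = \pi\circ\tau^{-1}$.

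The main obstacle, and the only place where care is genuinely required, is the bookkeeping of inverses in the composition step: it is easy to conflate $\pi\circ\tau^{-1}$ with $\tau^{-1}\circ\pi$ or to misapply the convention that $\tau$ acts on $x$ through $\tau^{-1}$ on indices while $\pi$ acts on $A$ through $\pi^{-1}$ on columns. I would therefore track one generic component index explicitly through both the vector-permutation definition and the matrix-permutation definition to make sure the composed permutation emerges as $\pi\circ\tau^{-1}$ exactly, rather than its inverse. Everything else is a direct appeal to Lemma~\ref{L:34}.
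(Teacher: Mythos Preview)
Your proposal is correct and follows essentially the same route as the paper: use \cite[Lemma~3.4(i)]{Fis1} to rewrite $A_\pi\tau(x)$ as $A\,\pi^{-1}(\tau(x))$, recognize this as $A_{\pi\circ\tau^{-1}}x$, and then apply Lemma~\ref{L:34} with $\pi\circ\tau^{-1}$ in the role of $\pi$. Your explicit index-tracking of the composition step is more detailed than the paper's two-line computation, but the argument is the same.
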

\begin{proof}
We apply \cite[Lemma 3.4 (i)]{Fis1} and obtain
\[
A_\pi \tau(x) = A\pi^{-1}(\tau(x)) = A(\tau \circ \pi^{-1})(x) = A_{\pi\circ\tau^{-1}}x.
\]
Using Lemma \ref{L:34} with $\pi \circ \tau^{-1}$ (in the role of $\pi$) gives the desired result.
\end{proof}

\begin{lemma} \label{L:42}
Let $\pi$ be a permutation on $\{1, \ldots, n^2\}$, let $\tau$ be a $\pi$-consistent 
permutation on $\{1, \ldots, n^2\}$ and let $x = (x_1, \ldots, x_{n^2})^T
\in \mathds{Z}^{n^2}$. The following statements hold: 
\begin{enumerate}[(i)] 
\item $1 \le x_i \le n$ for $i = 1, \ldots, n^2$ if and only if $1 \le x_{\tau^{-1}(i)} \le n$  \\
for $i = 1, \ldots, n^2$.  
\item $A_\pi x <> \mathbf{0}$ if and only if $A_\pi \tau(x) <> \mathbf{0}$.
\end{enumerate}
\end{lemma}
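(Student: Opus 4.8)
The plan is to treat the two parts separately, handling (i) by a purely set-theoretic observation and reducing (ii) to the distinctness criterion of Lemma \ref{L:34}.

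For part (i), I would first note that $\pi$-consistency is not even needed: since $\tau^{-1}$ is a bijection of $\{1, \ldots, n^2\}$ onto itself, the index $\tau^{-1}(i)$ runs through every element of $\{1, \ldots, n^2\}$ exactly once as $i$ ranges over $\{1, \ldots, n^2\}$. Hence the family of inequalities ``$1 \le x_{\tau^{-1}(i)} \le n$ for $i = 1, \ldots, n^2$'' is precisely the family ``$1 \le x_m \le n$ for $m = 1, \ldots, n^2$'' written in a permuted order, so the two conditions hold simultaneously.

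For part (ii), the idea is to apply Lemma \ref{L:34} to the vector $\tau(x)$ rather than to $x$. By definition the $i$-th component of $\tau(x)$ equals $x_{\tau^{-1}(i)}$, so Lemma \ref{L:34} tells me that $A_\pi \tau(x) <> \mathbf{0}$ holds if and only if, for each $j$, the values $x_{\tau^{-1}(i)}$ with $i \in cs_\pi(j)$ are distinct. The crucial point is that $\tau$, being $\pi$-consistent, has a $\pi$-consistent inverse $\tau^{-1}$ (as noted after Definition \ref{D:pi-consistent}), so $\tau^{-1}$ restricts to a bijection of each constraint set $cs_\pi(j)$ onto itself. Consequently $\{x_{\tau^{-1}(i)} \mid i \in cs_\pi(j)\} = \{x_m \mid m \in cs_\pi(j)\}$, and the first list is pairwise distinct exactly when the second is. Applying Lemma \ref{L:34} to $x$ itself then identifies the latter condition (holding for all $j$) with $A_\pi x <> \mathbf{0}$, which closes the chain of equivalences.

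I expect the only genuine step to be the observation that $\pi$-consistency forces $\tau^{-1}$ to permute the indices \emph{within} each single constraint set; once this is in place, the equality of the two index sets makes the distinctness conditions identical and the remainder is bookkeeping. An alternative route would rewrite $A_\pi \tau(x) = A_{\pi \circ \tau^{-1}} x$ as in the proof of Lemma \ref{L:41} and invoke Lemma \ref{L:34} for $\pi \circ \tau^{-1}$, but this forces one to compare the constraint sets of $\pi \circ \tau^{-1}$ with those of $\pi$, which do not coincide setwise in general; working directly with $\tau(x)$ sidesteps that complication.
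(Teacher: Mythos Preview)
Your proof is correct and follows essentially the same line as the paper: part (i) is dismissed as an immediate consequence of $\tau$ being a permutation, and part (ii) is reduced via Lemma~\ref{L:34} to the distinctness condition on each constraint set, which $\pi$-consistency of $\tau^{-1}$ preserves. The only cosmetic difference is that the paper closes the argument by invoking Lemma~\ref{L:41}, whereas you apply Lemma~\ref{L:34} a second time to $\tau(x)$ directly---your route is in fact slightly cleaner, since it avoids having to match the index lists $\tau^{-1}(\pi(\cdot))$ and $(\pi\circ\tau^{-1})(\cdot)$.
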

\begin{proof}
``(i)" This is clear, because $\tau$ is a permutation and does not modify the values of the 
components of $x$. \\
``(ii)" Using Lemma \ref{L:34}, $A_\pi x < > \mathbf{0}$ if and only if 
\[
x_{\pi((j-1) n+1)}, \ldots, x_{\pi(j \cdot n)} \mbox{ are distinct for }j = 1, \ldots, n. 
\]
By assumption, $\tau$ respectively $\tau^{-1}$ are $\pi$-consistent, i.e., $A_\pi x <> \mathbf{0}$ 
if and only if
\[
x_{\tau^{-1}(\pi((j-1) n+1))}, \ldots, x_{\tau^{-1}(\pi(j \cdot n))}
\mbox{ are distinct for }j = 1, \ldots, n. 
\]
The claim follows from Lemma \ref{L:41}.
\end{proof}

\begin{definition} \label{D:pi-x-consistent}
Let $\pi$ be a permutation on $\{1, \ldots, n^2\}$ and let 
$x = (x_1, \ldots, x_{n^2})^T \in \mathds{Z}^{n^2}$. A permutation 
$\tau$ on $\{1, \ldots, n^2\}$ is called $\pi$-$x$-consistent if 
\[
\{ x_{\tau^{-1}(i)} \mid i \in cs_\pi(j) \} = \{ x_i \mid i \in cs_\pi(j) \} 
\]
for $j = 1, \ldots, n$.
\end{definition}

In a classical Sudoku puzzle this definition states, that $\tau$ may permute not only values 
within a row, a column or a block, but more changes are allowed. It is required, that after 
the permutation each row, column and block contains the same values as before. Each 
$\pi$-consistent permutation is $\pi$-$x$-consistent for each 
$x \in \mathds{Z}^{n^2}$ and each permutation $\pi$ on $\{1, \ldots, n^2\}$.

\begin{lemma} \label{L:43}
Let $\pi$ and $\tau$ be permutations on $\{1, \ldots, n^2\}$ and let 
$x = (x_1, \ldots, x_{n^2})^T \in \mathds{Z}^{n^2}$, such that $1 \le x_i \le n$ 
for $i = 1, \ldots, n^2$. Consider the statements: 
\begin{enumerate}[(i)] 
\item $A_\pi x <> \mathbf{0}$. 
\item $A_\pi \tau(x) <> \mathbf{0}$. 
\item $\tau$ is $\pi$-$x$-consistent.
\end{enumerate}
Each two of the statements imply the third.
\end{lemma}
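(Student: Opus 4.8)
The plan is to convert each of the three conditions into an equivalent statement about the sets of values carried by the constraint sets of $\pi$, after which the three pairwise implications collapse into elementary substitutions among set equalities. The engine for this is Lemma \ref{L:35}, which already identifies the conjunction ``$A_\pi z <> \mathbf{0}$ and $1 \le z_i \le n$'' with the condition that each constraint set of $\pi$ carries exactly the values $\{1, \ldots, n\}$ under $z$.

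First I would fix the shorthand $S_j = \{ x_i \mid i \in cs_\pi(j)\}$ and $T_j = \{ x_{\tau^{-1}(i)} \mid i \in cs_\pi(j)\}$ for $j = 1, \ldots, n$. Since $1 \le x_i \le n$ for all $i$ by hypothesis, Lemma \ref{L:35} applied to $x$ shows that statement (i) is equivalent to $S_j = \{1, \ldots, n\}$ for every $j$. For statement (ii) I would apply Lemma \ref{L:35} to the vector $\tau(x)$, whose $i$-th component is $x_{\tau^{-1}(i)}$. The boundedness premise of that lemma is met because $\tau^{-1}$ merely reindexes the components of $x$, so $1 \le x_{\tau^{-1}(i)} \le n$ for all $i$; hence (ii) is equivalent to $\{ (\tau(x))_i \mid i \in cs_\pi(j)\} = T_j = \{1, \ldots, n\}$ for every $j$. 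Finally, statement (iii) is, by Definition \ref{D:pi-x-consistent}, exactly the assertion $T_j = S_j$ for every $j$.

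With these three reformulations in hand the conclusion is immediate. If (i) and (ii) hold, then $S_j = \{1, \ldots, n\} = T_j$ for each $j$, which is (iii). If (i) and (iii) hold, then $T_j = S_j = \{1, \ldots, n\}$ for each $j$, which is (ii). If (ii) and (iii) hold, then $S_j = T_j = \{1, \ldots, n\}$ for each $j$, which is (i).

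I expect the only point requiring genuine care to be the second reformulation: one must correctly read off the components of $\tau(x)$ and confirm that the boundedness premise of Lemma \ref{L:35} transfers to $\tau(x)$ \emph{without} invoking any consistency of $\tau$, since this transfer is a pure reindexing rather than a consequence of the $\pi$-$x$-consistency we are trying to characterize. Once that is secured, the remainder is bookkeeping with equalities of finite sets of cardinality $n$.
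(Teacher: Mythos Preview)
Your argument is correct and follows essentially the same strategy as the paper: reformulate each of (i), (ii), (iii) as the set equalities $S_j = \{1,\ldots,n\}$, $T_j = \{1,\ldots,n\}$, and $S_j = T_j$, after which the three pairwise implications are immediate. The only difference is in handling (ii): the paper reaches the reformulation $T_j = \{1,\ldots,n\}$ via Lemma~\ref{L:41} (passing through $cs_{\pi\circ\tau^{-1}}(j)$ and then substituting $i \mapsto \tau^{-1}(i)$), whereas you apply Lemma~\ref{L:35} directly to the vector $\tau(x)$, which is a slightly cleaner route to the same endpoint.
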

\begin{proof}
By Lemma \ref{L:35}, (i) holds if and only if
\[
\{ x_i \mid i \in cs_\pi(j) \} = \{1, \ldots, n \} \mbox{ for }j = 1, \ldots, n.
\]
Using Lemma \ref{L:41} and \cite[Lemma 3.1]{Fis1}, (ii) holds if and only if
\[
\{ x_i \mid i \in cs_{\pi\circ\tau^{-1}}(j) \} = \{1, \ldots, n \} \mbox{ for }j = 1, \ldots, n.
\]
Substituting $i$ with $\tau^{-1}(i)$,
\begin{align*}
\{ x_i \mid i \in cs_{\pi\circ\tau^{-1}}(j) \} 
& = \{ x_{\tau^{-1}(i)} \mid i \in \tau(cs_{\pi \circ \tau^{-1}}(j)) \} \\
& = \{ x_{\tau^{-1}(i)} \mid i \in cs_\pi(j) \} 
\end{align*}
for $j = 1, \ldots, n$, i.e., (ii) holds if and only if 
\[
\{ x_{\tau^{-1}(i)} \mid i \in cs_\pi(j) \}  = \{1, \ldots, n \} \mbox{ for }j = 1, \ldots, n.
\]
The claim follows from the definition of  $\pi$-$x$-consistent.
\end{proof}

We define the solution set of the generalized Sudoku problem by
% \: are separators!
\begin{align*}
S(n, g) 
= \{x = (x_1, \ldots, x_{n^2})^T \in \mathds{Z}^{n^2} \mid 
& \: 1 \le x_i \le n \mbox{ for } i = 1, \ldots, n^2, \\
& \: A_{\pi_r} x <> \mathbf{0} \mbox{ for } r = 1, 2, 3 \mbox{ and } \\
& \: x_{i_l} = g_{i_l}  \mbox{ for }l = 1, \ldots, k \: \}.
\end{align*}

\begin{lemma} \label{L:44}
Let $x, y \in S(n, g)$ and let $r \in \{1, 2, 3\}$. There exists a permutation $\tau$ 
on $\{1, \ldots, n^2\}$ with the following properties: 
\begin{enumerate}[(i)] 
\item $y = \tau(x)$, 
\item $\tau$ is $\pi_r$-consistent, 
\item $\tau$ is $\pi_s$-$x$-consistent for $s = 1, 2, 3$ and 
\item $\tau(i_l) = i_l$ for $l=1, \ldots, k$.
\end{enumerate}
\end{lemma}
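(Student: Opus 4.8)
The plan is to build $\tau$ explicitly, working separately on each constraint set of the fixed permutation $\pi_r$, and then to read off the four properties one by one. The starting observation is that since $x, y \in S(n, g)$ we have $A_{\pi_r} x <> \mathbf{0}$ and $A_{\pi_r} y <> \mathbf{0}$, so Lemma \ref{L:35} applies to both vectors and yields $\{x_i \mid i \in cs_{\pi_r}(j)\} = \{1, \ldots, n\}$ and $\{y_i \mid i \in cs_{\pi_r}(j)\} = \{1, \ldots, n\}$ for $j = 1, \ldots, n$. Thus on each constraint set $cs_{\pi_r}(j)$ the maps $i \mapsto x_i$ and $i \mapsto y_i$ are bijections onto $\{1, \ldots, n\}$. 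This setwise agreement of $x$ and $y$ on every constraint set is exactly what will permit a single $\tau$ to be both $\pi_r$-consistent and to carry $x$ to $y$.

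For the construction, fix $j$ and, for each cell $i \in cs_{\pi_r}(j)$, define $\tau(i)$ to be the unique cell $i' \in cs_{\pi_r}(j)$ with $y_{i'} = x_i$; existence and uniqueness follow from the two bijections above. By Lemma \ref{L:31} the sets $cs_{\pi_r}(1), \ldots, cs_{\pi_r}(n)$ partition $\{1, \ldots, n^2\}$, so this defines $\tau$ on all cells, and since the restriction of $\tau$ to $cs_{\pi_r}(j)$ is a composition of two bijections it is a bijection of that constraint set onto itself. Hence $\tau$ is a permutation of $\{1, \ldots, n^2\}$ with $\tau(cs_{\pi_r}(j)) = cs_{\pi_r}(j)$, which is property (ii). Recalling that $\tau(x)$ has $i$-th component $x_{\tau^{-1}(i)}$, the defining relation $y_{\tau(i)} = x_i$ rewrites as $y_i = x_{\tau^{-1}(i)}$, i.e.\ $y = \tau(x)$, which is property (i).

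Property (iii) then comes almost for free: using $y = \tau(x)$ we have $\{x_{\tau^{-1}(i)} \mid i \in cs_{\pi_s}(j)\} = \{y_i \mid i \in cs_{\pi_s}(j)\}$, and by Lemma \ref{L:35} applied to $y$ this set equals $\{1, \ldots, n\}$, while Lemma \ref{L:35} applied to $x$ gives $\{x_i \mid i \in cs_{\pi_s}(j)\} = \{1, \ldots, n\}$; since both sides equal $\{1, \ldots, n\}$, the defining condition of Definition \ref{D:pi-x-consistent} holds, so $\tau$ is $\pi_s$-$x$-consistent for $s = 1, 2, 3$. For property (iv), each given index $i_l$ lies in exactly one set $cs_{\pi_r}(j)$ and satisfies $x_{i_l} = g_{i_l} = y_{i_l}$, because both $x$ and $y$ lie in $S(n, g)$. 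By construction $\tau(i_l)$ is the unique cell of $cs_{\pi_r}(j)$ whose $y$-value equals $x_{i_l} = g_{i_l}$; since $i_l$ itself has $y$-value $g_{i_l}$ and the $y$-values on that constraint set are distinct, this cell must be $i_l$, so $\tau(i_l) = i_l$.

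The only genuine obstacle is guaranteeing that the two demands on $\tau$ — preserving the constraint sets of $\pi_r$ and carrying $x$ to $y$ — are simultaneously satisfiable; this is precisely what Lemma \ref{L:35} secures, by forcing $x$ and $y$ to agree setwise on every constraint set of $\pi_r$, so that the value-matching map stays inside each set. Beyond that one need only keep the direction of the inverse in the definition $\tau(x) = (x_{\tau^{-1}(1)}, \ldots, x_{\tau^{-1}(n^2)})^T$ straight and invoke the partition property of Lemma \ref{L:31}; the remaining verifications are immediate.
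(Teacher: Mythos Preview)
Your proof is correct and follows essentially the same route as the paper: build $\tau$ blockwise on the constraint sets of $\pi_r$ via the value-matching bijections guaranteed by Lemma~\ref{L:35}, glue using the partition from Lemma~\ref{L:31}, and verify (i)--(iv) in turn. The only cosmetic difference is that for (iii) the paper appeals to Lemma~\ref{L:43} (using $A_{\pi_s}\tau(x)=A_{\pi_s}y<> \mathbf{0}$), whereas you unpack this directly via Lemma~\ref{L:35}; the content is the same.
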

\begin{proof}
Let $x = (x_1, \ldots, x_{n^2})^T$ and $y = (y_1, \ldots, y_{n^2})^T$.
We apply Lemma \ref{L:35} to $\pi_r$, $x$ and $y$, which implies
\[
\{ x_i \mid i \in cs_{\pi_r}(j) \} = \{ y_i \mid i \in cs_{\pi_r}(j) \} = \{1, \ldots, n\}
\]
for $j = 1, \ldots, n$. There exist permutations $\tau_j$ on $cs_{\pi_r}(j)$, 
such that $x_i = y_{\tau_j(i)}$ for each $i \in cs_{\pi_r}(j)$ and $j=1, \ldots, n$.
By Lemma \ref{L:31}, the constraint sets form a partition of $\{1, \ldots, n^2\}$ 
and we can combine the $\tau_j$ to a permutation $\tau$ on $\{1, \ldots, n^2\}$ with
\[
\tau_{\mid cs_{\pi_r}(j)} = \tau_j \mbox{ for }j=1, \ldots, n. 
\]

Here $\mid$ denotes the restriction of a mapping to a set.
This $\tau$ is also a permutation, since all $\tau_j's$ are permutations and the 
constraint sets form a partition of $\{1, \ldots, n^2\}$.

``(i)" Let $i \in \{1, \ldots, n^2\}$ be any cell. There exists a (uniquely determined) 
index $1 \le j \le n$, such that  $i \in cs_{\pi_r}(j)$, i.e., $i$ is contained in the $j^{th}$ 
constraint set. This implies $y_{\tau(i)} = y_{\tau_j(i)} = x_i$. Since $i$ had been chosen 
arbitrarily, we obtain $x_{\tau^{-1}(i)} = y_i$ for $i \in \{1, \ldots, n^2\}$ and
\[
\tau(x) = (x_{\tau^{-1}(1)}, \ldots, x_{\tau^{-1}(n^2)}) 
= (y_1, \ldots, y_{n^2}) = y, \mbox{ i.e., (i)}.
\]

``(ii)" Obviously, $\tau$ is $\pi_r$-consistent, since each $\tau_j$ is a permutation on 
$cs_{\pi_r}(j)$ for $j=1, \ldots, n$.

``(iii)" Let $s \in \{1, 2, 3\}$. Since $y \in S(n, g)$,
\[
A_{\pi_s}\tau(x) = A_{\pi_s} y <> \mathbf{0}
\]
and, by Lemma \ref{L:43}, $\tau$ is $\pi_s$-$x$-consistent.

``(iv)" Let $l \in \{1, \ldots, k\}$. There exists an index $1 \le j \le n$, such that 
$i_l \in cs_{\pi_r}(j)$. Then $\tau_j(i_l) \in cs_{\pi_r}(j)$ and
\[
y_{i_l} = g_{i_l} = x_{i_l} = y_{\tau_j(i_l)} = y_{\tau(i_l)}.
\]
By Lemma \ref{L:34}, $y_s \ne y_t$ for each $s, t \in cs_{\pi_r}(j)$ if $s \ne t$. 
This implies $i_l = \tau_j(i_l) = \tau(i_l)$ and proves the claim.
\end{proof}

Applied to a classical Sudoku puzzle with $r=1$ the permutation $\tau$ describes a
mapping, which changes cells within a row. This permutation does not preserve 
the columns, but after the permutation we have distinct values in each column. 
The cells with a given are not touched by the permutation $\tau$.

Now we prove two descriptions of the solution set of the generalized Sudoku 
problem. These descriptions are based on permutations.

\begin{theorem} \label{T:41}
Let $x$ be a solution of the generalized Sudoku problem, i.e., $x \in S(n, g)$. Then
\begin{align*}
S(n, g) = \{y \in \mathds{Z}^{n^2} \mid 
& \: \mbox{there exist permutations } \tau_1, \tau_2 \mbox{ and } \tau_3 \\
& \: \mbox{on } \{1, \ldots, n^2\}, \mbox{such that} \\
& \: y = \tau_r(x) \mbox{ for }r = 1, 2, 3, \\
& \: \tau_r \mbox{ is }\pi_r\mbox{-consistent for }  r = 1, 2, 3, \mbox{and} \\
& \: \tau_r(i_l) = i_l \mbox{ for }l=1, \ldots, k \mbox{ and } r = 1, 2, 3 \: \}.
\end{align*}
\end{theorem}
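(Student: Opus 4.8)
Denote the right-hand side by $T$. The forward inclusion $S(n,g) \subset T$ is essentially a direct consequence of Lemma \ref{L:44}. The reverse inclusion $T \subset S(n,g)$ requires verifying that any $y$ built from $\pi_r$-consistent permutations fixing the givens actually satisfies all the defining constraints of the solution set, and this is where the real work lies.

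For the inclusion $S(n,g) \subset T$, I would take an arbitrary $y \in S(n,g)$ and, for each $r \in \{1,2,3\}$, apply Lemma \ref{L:44} with this fixed $r$. The lemma directly produces a permutation $\tau$ (which I rename $\tau_r$) satisfying $y = \tau_r(x)$ by part (i), being $\pi_r$-consistent by part (ii), and fixing each given cell $\tau_r(i_l) = i_l$ by part (iv). Collecting these three permutations $\tau_1, \tau_2, \tau_3$ shows $y$ meets every membership condition of $T$, so $y \in T$. This direction is short, since Lemma \ref{L:44} was engineered precisely for it.

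For the reverse inclusion $T \subset S(n,g)$, I would fix $y \in T$ with associated permutations $\tau_1, \tau_2, \tau_3$ and verify the three defining properties of $S(n,g)$. The bound $1 \le y_i \le n$ follows because $y = \tau_1(x)$ merely permutes the components of $x$, which already satisfies the bounds as $x \in S(n,g)$; I would invoke Lemma \ref{L:42}(i). For the constraint $A_{\pi_r} y <> \mathbf{0}$, the key observation is that $\tau_r$ is $\pi_r$-consistent and $y = \tau_r(x)$, so I would apply Lemma \ref{L:42}(ii): since $A_{\pi_r} x <> \mathbf{0}$ holds (again from $x \in S(n,g)$) and $\tau_r$ is $\pi_r$-consistent, we get $A_{\pi_r} \tau_r(x) = A_{\pi_r} y <> \mathbf{0}$. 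This must be checked for each $r = 1,2,3$ using the matching $\tau_r$. Finally, the given-cell condition $y_{i_l} = g_{i_l}$ follows by evaluating: using $y = \tau_r(x)$ componentwise gives $y_{i_l} = x_{(\tau_r)^{-1}(i_l)}$, and since $\tau_r(i_l) = i_l$ forces $(\tau_r)^{-1}(i_l) = i_l$, this equals $x_{i_l} = g_{i_l}$.

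\textbf{The main obstacle} I anticipate is bookkeeping rather than conceptual difficulty: each of the three constraint families $A_{\pi_r} y <> \mathbf{0}$ must be verified using its own corresponding permutation $\tau_r$, and it is tempting but incorrect to use a single $\tau$ for all three. The subtlety in $T$'s definition is precisely that it allows a \emph{different} $\pi_r$-consistent permutation for each $r$, which is what makes Lemma \ref{L:42}(ii) applicable in each case. The one point demanding care is confirming that $\pi_r$-consistency of $\tau_r$ is exactly the hypothesis Lemma \ref{L:42}(ii) requires, so that the equivalence $A_{\pi_r} x <> \mathbf{0} \Leftrightarrow A_{\pi_r}\tau_r(x) <> \mathbf{0}$ transfers the already-verified constraint from $x$ to $y$.
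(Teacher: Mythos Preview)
Your proof is correct and follows essentially the same two-inclusion strategy as the paper: the forward inclusion via Lemma~\ref{L:44} is identical, and for the reverse inclusion you invoke Lemma~\ref{L:42}(i),(ii) where the paper instead unwinds the constraint-set computation directly and appeals to Lemma~\ref{L:35}. Since Lemma~\ref{L:42} packages exactly that computation, the two arguments are interchangeable.
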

\begin{proof}
``$\subset$" Let $y \in S(n, g)$. By Lemma \ref{L:44}, there exist permutations 
$\tau_1$, $\tau_2$ and $\tau_3$ on $\{1, \ldots, n^2\}$, such that $y = \tau_r(x)$ for 
$r = 1, 2, 3$, $\tau_r$ is $\pi_r$-consistent  for $r = 1, 2, 3$ and
$\tau_r(i_l) = i_l$  for $l=1, \ldots, k$ and $r=1, 2, 3$. \\
``$\supset$" Let  $y \in \mathds{Z}^{n^2}$, let $\tau_1$, $\tau_2$ and $\tau_3$ 
be permutations on $\{1, \ldots, n^2\}$, such that $y = \tau_r(x)$ for $r = 1, 2, 3$,
$\tau_r$ is $\pi_r$-consistent for $r = 1, 2, 3$ and $\tau_r(i_l) = i_l$  for $l=1, \ldots, k$ 
and $r=1, 2, 3$. Let $r \in \{1, 2, 3\}$ and $j \in \{1,\ldots, n\}$, then
\begin{align*}
\{ y_i \mid i \in cs_{\pi_r}(j) \} 
& = \{ x_{\tau^{-1}_r(i)} \mid i \in cs_{\pi_r}(j) \} \\
& = \{ x_i \mid i \in cs_ {\pi_r}(j) \} \\
& = \{1, \ldots, n\},
\end{align*}
since $\tau_r$ is $\pi_r$-consistent and using Lemma \ref{L:35}. By Lemma \ref{L:35} applied 
to $y$, $A_{\pi_r}y <> \mathbf{0}$ and $1 \le y_i \le n$ for $i=1, \ldots, n^2$. Moreover 
$y_{i_l} = x_{\tau^{-1}_1(i_l)} = x_{i_l} = g_{i_l}$ for $l=1, \ldots, k$, i.e., $y \in S(n, g)$.
\end{proof}

For each solution $y \in S(n, g)$ the permutations in Theorem \ref{T:41} are uniquely determined 
as we see from the next lemma.

\begin{lemma} \label{L:45}
Let $\pi$ be a permutation on $\{1, \ldots, n^2\}$, let $x \in \mathds{Z}^{n^2}$ 
such that $A_\pi x <> \mathbf{0}$ and let $y \in \mathds{Z}^{n^2}$. Let $\tau_1$ and 
$\tau_2$ be $\pi$-consistent permutations on $\{1, \ldots, n^2\}$, such that $y=\tau_1(x)$ 
and $y=\tau_2(x)$. Then $\tau_1 = \tau_2$.
\end{lemma}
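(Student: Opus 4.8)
The plan is to pass from the two vector identities $y = \tau_1(x)$ and $y = \tau_2(x)$ to a componentwise statement, and then to exploit that $A_\pi x <> \mathbf{0}$ forces the entries of $x$ to be distinct inside each constraint set of $\pi$.

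First I would unwind the definition of the action $\tau(x) = (x_{\tau^{-1}(1)}, \ldots, x_{\tau^{-1}(n^2)})^T$. Comparing the two expressions for $y$ componentwise gives
\[
x_{\tau_1^{-1}(i)} = y_i = x_{\tau_2^{-1}(i)} \quad \mbox{for } i = 1, \ldots, n^2.
\]
Thus it suffices to show $\tau_1^{-1}(i) = \tau_2^{-1}(i)$ for every $i$, since this yields $\tau_1^{-1} = \tau_2^{-1}$ and hence $\tau_1 = \tau_2$.

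Next I would localize to a single constraint set. Fix a cell $i$ and let $j$ be the unique index with $i \in cs_\pi(j)$ (unique by Lemma \ref{L:31}). Because $\tau_1$ and $\tau_2$ are $\pi$-consistent, so are $\tau_1^{-1}$ and $\tau_2^{-1}$ (as noted after Definition \ref{D:pi-consistent}); hence $\tau_1^{-1}(i), \tau_2^{-1}(i) \in cs_\pi(j)$. Meanwhile, the hypothesis $A_\pi x <> \mathbf{0}$ together with Lemma \ref{L:34} says that $x_{\pi((j-1)n+1)}, \ldots, x_{\pi(j \cdot n)}$ are distinct, i.e.\ the map $m \mapsto x_m$ is injective on $cs_\pi(j)$. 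Applying this injectivity to the equality $x_{\tau_1^{-1}(i)} = x_{\tau_2^{-1}(i)}$, whose two indices both lie in $cs_\pi(j)$, gives $\tau_1^{-1}(i) = \tau_2^{-1}(i)$. Since $i$ was arbitrary, the claim follows.

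I expect the only subtle point to be the bookkeeping of the inverse: the action $\tau(x)$ is defined through $\tau^{-1}$, so one must verify that $\tau_r^{-1}$ (and not $\tau_r$ itself) preserves $cs_\pi(j)$ in order to place both indices $\tau_1^{-1}(i)$ and $\tau_2^{-1}(i)$ inside the constraint set where $x$ is known to be injective. Everything else is a direct application of Lemma \ref{L:34} and the partition property of Lemma \ref{L:31}.
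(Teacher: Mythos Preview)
Your proof is correct and follows essentially the same route as the paper: reduce $y=\tau_1(x)=\tau_2(x)$ to the componentwise identity $x_{\tau_1^{-1}(i)}=x_{\tau_2^{-1}(i)}$, use $\pi$-consistency of the inverses together with Lemma~\ref{L:31} to place both indices in the same constraint set $cs_\pi(j)$, and then invoke Lemma~\ref{L:34} for injectivity of $m\mapsto x_m$ on that set. The paper's version is terser but makes the identical moves in the same order.
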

\begin{proof}
Let $i \in \{1, \ldots, n^2\}$. By Lemma \ref{L:31}, there exists $j \in \{1, \ldots, n\}$ and a 
constraint set $cs_\pi(j)$, such that $i \in  cs_\pi(j)$. By assumption, $\tau_1(x) = \tau_2(x)$, hence 
$x_{\tau_1^{-1}(i)} = x_{\tau_2^{-1}(i)}$. Since $\tau_1$ and $\tau_2$ are $\pi$-consistent, 
$\tau_1^{-1}(i) \in cs_\pi(j)$ and $\tau_2^{-1}(i) \in cs_\pi(j)$. By Lemma \ref{L:34},
$\tau_1^{-1}(i) = \tau_2^{-1}(i)$. Since $i$ had been chosen arbitrarily, $\tau_1^{-1} = \tau_2^{-1}$ 
and we obtain
$\tau_1 = (\tau_2 \circ \tau_2^{-1}) \circ \tau_1 = \tau_2 \circ (\tau_1^{-1} \circ \tau_1) = \tau_2$.
\end{proof}

The description in Theorem \ref{T:41} uses the condition $\pi$-consistent for three permutations. 
The next theorem describes the solution set by one permutation with the weaker condition 
$\pi$-$x$-consistent. Again by Lemma \ref{L:45}, the permutation $\tau$ in Theorem \ref{T:42} 
is also uniquely determined for each solution $y$.

\begin{theorem} \label{T:42}
Let $x$ be a solution of the generalized Sudoku problem, i.e., $x \in S(n, g)$. Then
\begin{align*}
S(n, g) 
= \{y \in \mathds{Z}^{n^2} \mid 
& \: \mbox{there exists a permutation }\tau \mbox{ on } \{1, \ldots, n^2\}, \\
& \: \mbox{such that } y = \tau(x), \tau \mbox{ is } \pi_1\mbox{-consistent }, \\
& \: \pi_2\mbox{-}x\mbox{-consistent}, \pi_3\mbox{-}x\mbox{-consistent} \mbox{ and} \\
& \: \tau(i_l) = i_l \mbox{ for }l=1, \ldots, k \: \}.
\end{align*}
\end{theorem}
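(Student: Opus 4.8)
The plan is to prove the two set inclusions separately, closely following the structure of the proof of Theorem \ref{T:41}, since the two theorems differ only in which consistency conditions are imposed on the permutation.

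For the inclusion ``$\subset$'', let $y \in S(n, g)$. I would invoke Lemma \ref{L:44} with the choice $r = 1$, which produces a single permutation $\tau$ satisfying $y = \tau(x)$, $\tau$ is $\pi_1$-consistent, $\tau$ is $\pi_s$-$x$-consistent for $s = 1, 2, 3$, and $\tau(i_l) = i_l$ for $l = 1, \ldots, k$. In particular $\tau$ is $\pi_2$-$x$-consistent and $\pi_3$-$x$-consistent, which are exactly the properties required in the statement (the $\pi_1$-$x$-consistency from Lemma \ref{L:44}(iii) is not even needed here, since we already have the stronger $\pi_1$-consistency). Thus this $\tau$ witnesses that $y$ belongs to the right-hand side, and the first inclusion is immediate.

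For the inclusion ``$\supset$'', let $y \in \mathds{Z}^{n^2}$ together with a permutation $\tau$ having the listed properties, and I must show $y \in S(n, g)$. The goal is to verify the three defining conditions of $S(n, g)$: the box constraints $1 \le y_i \le n$, the distinctness conditions $A_{\pi_r} y <> \mathbf{0}$ for $r = 1, 2, 3$, and the given-values condition $y_{i_l} = g_{i_l}$. First I would note that since $x \in S(n, g)$, we have $A_{\pi_r} x <> \mathbf{0}$ with $1 \le x_i \le n$ for every $r$; the box constraints on $y$ then follow from the analogue of the argument in Theorem \ref{T:41} (or directly from Lemma \ref{L:42}(i), since a permutation only rearranges components). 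For $r = 1$, since $\tau$ is $\pi_1$-consistent and $y = \tau(x)$, the distinctness condition $A_{\pi_1} y <> \mathbf{0}$ follows exactly as in Theorem \ref{T:41}, using that $\{ y_i \mid i \in cs_{\pi_1}(j) \} = \{ x_{\tau^{-1}(i)} \mid i \in cs_{\pi_1}(j) \} = \{ x_i \mid i \in cs_{\pi_1}(j) \} = \{1, \ldots, n\}$ and Lemma \ref{L:35}. The given-values condition $y_{i_l} = x_{\tau^{-1}(i_l)} = x_{i_l} = g_{i_l}$ follows from $\tau(i_l) = i_l$ exactly as before.

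The main obstacle, and the only genuinely new ingredient compared to Theorem \ref{T:41}, is establishing $A_{\pi_r} y <> \mathbf{0}$ for $r = 2, 3$, where I have only the weaker hypothesis that $\tau$ is $\pi_r$-$x$-consistent rather than $\pi_r$-consistent. Here I would apply Lemma \ref{L:43} with $\pi = \pi_r$: the box constraints $1 \le x_i \le n$ hold, statement (i) $A_{\pi_r} x <> \mathbf{0}$ holds since $x \in S(n, g)$, and statement (iii) that $\tau$ is $\pi_r$-$x$-consistent holds by hypothesis. By Lemma \ref{L:43}, any two of the three statements imply the third, so statement (ii), namely $A_{\pi_r} \tau(x) <> \mathbf{0}$, follows; since $y = \tau(x)$, this is precisely $A_{\pi_r} y <> \mathbf{0}$. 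Having verified all three defining conditions, I conclude $y \in S(n, g)$, completing the second inclusion and the proof.
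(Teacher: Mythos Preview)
Your proof is correct and follows essentially the same approach as the paper: Lemma~\ref{L:44} for the inclusion ``$\subset$'' and Lemma~\ref{L:43} for the nontrivial part of ``$\supset$''. The only cosmetic difference is that the paper applies Lemma~\ref{L:43} uniformly for all $r=1,2,3$ (using that $\pi_1$-consistency implies $\pi_1$-$x$-consistency), whereas you treat $r=1$ separately via the direct $\pi_1$-consistency argument; both are fine.
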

\begin{proof}
``$\subset$"  Let $y \in S(n, g)$. Using Lemma \ref{L:44}, there exists a permutation $\tau$ 
with the desired properties. \\
``$\supset$" Let $y \in \mathds{Z}^{n^2}$, $\tau$ be a permutation, such that $y=\tau(x)$,
$\tau$ is $\pi_1$-consistent, $\pi_2$-$x$-consistent, $\pi_3$-$x$-consistent and $\tau(i_l) = i_l$ 
for $l=1, \ldots, k$. The components of $y$ are permutations of the components of $x$, i.e., 
$1 \le y_i \le n$ for $i=1, \ldots, n^2$. By Lemma \ref{L:43},
\[
A_{\pi_r} y = A_{\pi_r}\tau(x) <> \mathbf{0} \mbox{ for }r=1, 2, 3.
\]
Moreover $y_{i_l} = x_{\tau^{-1}(i_l)} = x_{i_l} = g_ {i_l}$ for $l=1, \ldots, k$
and this shows $y \in S(n, g)$.
\end{proof}

From this theorem we can derive a characterization of uniquely solvable generalized 
Sudoku problems.

\begin{theorem} \label{T:43}
Let $x$ be a solution of the generalized Sudoku problem, i.e., $x \in S(n, g)$. Then $x$ is the
unique solution of the generalized Sudoku problem if and only if there does not exist a 
permutation $\tau$ on $\{1, \ldots, n^2\}$, such that $\tau(x) \ne x$, $\tau$ is 
$\pi_1$-consistent, $\pi_2$-$x$-consistent, $\pi_3$-$x$-consistent and $\tau(i_l) = i_l$ for 
$l=1, \ldots, k$.
\end{theorem}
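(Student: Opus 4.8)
The plan is to derive this characterization directly from Theorem \ref{T:42}, which already furnishes a complete parametrization of $S(n,g)$ by permutations acting on the fixed solution $x$. First I would observe that, since $x \in S(n,g)$ holds by hypothesis, the assertion ``$x$ is the unique solution'' is equivalent to the set equality $S(n,g) = \{x\}$, that is, to the nonexistence of any $y \in S(n,g)$ with $y \ne x$. The task therefore reduces to translating this nonexistence condition, phrased in terms of elements $y$, into the nonexistence condition phrased in terms of permutations $\tau$.

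For the forward implication I would argue by contraposition. Suppose a permutation $\tau$ exists with $\tau(x) \ne x$, with $\tau$ being $\pi_1$-consistent, $\pi_2$-$x$-consistent and $\pi_3$-$x$-consistent, and with $\tau(i_l) = i_l$ for $l = 1, \ldots, k$. Setting $y = \tau(x)$, the ``$\supset$'' inclusion of Theorem \ref{T:42} gives $y \in S(n,g)$, while $y = \tau(x) \ne x$ exhibits $y$ as a second solution. Hence $x$ is not unique, which is the contrapositive of the desired ``only if'' direction.

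For the reverse implication I would likewise argue by contraposition. Suppose $x$ is not the unique solution, so there is some $y \in S(n,g)$ with $y \ne x$. The ``$\subset$'' inclusion of Theorem \ref{T:42} produces a permutation $\tau$ with $y = \tau(x)$ that is $\pi_1$-consistent, $\pi_2$-$x$-consistent and $\pi_3$-$x$-consistent and that fixes each $i_l$. Since $\tau(x) = y \ne x$, this $\tau$ is precisely a permutation of the kind whose nonexistence was assumed, completing the equivalence.

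There is no genuine analytic obstacle here: the argument is a purely logical repackaging of the two inclusions in Theorem \ref{T:42}. The only point demanding care is the bookkeeping between the two formulations, namely ensuring that the condition $\tau(x) \ne x$ corresponds exactly to $y \ne x$ and that all four consistency and fixing properties are transported intact across both inclusions. Because everything rests on Theorem \ref{T:42}, no further appeal to the constraint-set lemmas of Sections \ref{S:constraint_sets} or \ref{S:unicity_perm} is required.
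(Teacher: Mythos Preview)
Your proposal is correct and matches the paper's approach: the paper presents Theorem~\ref{T:43} without proof, merely noting that it is derived from Theorem~\ref{T:42}, which is exactly the logical repackaging you carry out.
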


                                        %
                                        % Section 5
                                        % 
\section{Unicity by Unicity Cells} \label{S:unicity_cells}
In this section, we consider a unicity concept, which we call unicity cell. This definition allows ``local" 
considerations of unicity, i.e., we characterize, when a single cell contains a unique value. 

\begin{definition} \label{D:unicity_cell}
A cell $i \in \{1, \ldots, n^2\}$ is called a unicity cell with unique value $v \in \{1, \ldots, n\}$ 
if all solutions $x=(x_1, \ldots, x_{n^2})^T$ of the generalized Sudoku problem satisfy 
$x_i = v$.
\end{definition}

Please note, this definition does not say anything on the solvability of the generalized Sudoku 
problem. In an unsolvable generalized Sudoku problem each cell is a unicity cell for trivial reasons.
It is immediately clear from the definition, that a solvable generalized Sudoku problem admits a 
unique solution if and only if each cell $i \in \{1, \ldots, n^2\}$ is a unicity cell.

We continue with the definition of a Sudoku subproblem, which we call the reduced problem.
The condition $x_{i_l} = g_{i_l}$ for $l = 1, \ldots, k$ in the definition of the generalized Sudoku 
problem can be written as $A_{eq} x = g$ with a suitable $k \times n^2$ matrix $A_{eq}$ 
(describing the equality conditions) and a right side 
$g = (g_{i_1}, \ldots, g_{i_k})^T \in \mathds{Z}^k$. Roughly spoken, the matrix $A_{eq}$ is a 
unit matrix, where some of the rows had been left. 

We divide the set of all cells $\{1, \ldots, n^2\}$ into the set of known cells 
$\{i_1, \ldots, i_k\}$ (determining $A_{eq}$ and $g$) and the set of unknown cells 
$\{1, \ldots, n^2\} \backslash \{i_1, \ldots, i_k\}$ (see Section \ref{S:model}). Each known cell $i_l$ is a 
unicity cell with unique value $g_{i_l}$ for $l=1, \ldots, k$.

Let $1 \le p \le n^2$ be an integer and let $J=\{j_1, \ldots, j_p\} \subset 
\{1, \ldots, n^2\}$ be a set of cells with $j_1 < \ldots < j_p$ (and possibly containing known 
and unknown cells). We consider the projection
 $P_J: \mathds{Z}^{n^2} \longrightarrow \mathds{Z}^p$
defined by 
$P_J(x) = (x_{j_1}, \ldots, x_{j_p})^T \mbox{ for each }
x=(x_1, \ldots, x_{n^2})^T \in \mathds{Z}^{n^2}.$

This projection induces a transformation $t_J: J \longrightarrow \{1, \ldots, p\}$ defined by 
$t_J(j_l) = l$ for $l=1, \ldots, p$. The $t_J(j)^{th}$-component of $P_J(x)$, $P_J(x)_{t_J(j)}=x_j$ 
for each $x=(x_1, \ldots, x_{n^2})^T \in \mathds{Z}^{n^2}$ and $j \in J$.

For any $q \times n^2$-matrix 
\[
M=
\begin{pmatrix} 
m_1^T \\
\vdots \\
m_q^T
\end{pmatrix}
\]
with $q\in\mathds{Z}$ rows, $q \ge 1$, and vectors $m_l \in \mathds{Z}^{n^2}$ for $l=1, \ldots, q$,
we define 
\[
P_J(M)=
\begin{pmatrix} 
P_J(m_1)^T \\
\vdots \\
P_J(m_q)^T
\end{pmatrix}.
\]
We consider the projected matrices $P_J(A_{\pi_1})$, $P_J(A_{\pi_2})$, $P_J(A_{\pi_3})$ and 
$P_J(A_{eq})$ and delete all zero rows. Additionally, we delete all rows from $P_J(A_{\pi_r})$ 
for $r=1, 2, 3$ with at most one nonzero component. The resulting matrices are denoted by $B_1$, $B_2$, 
$B_3$ and $B_{eq}$. We also delete the components of the point $g=(g_{i_1}, \ldots, g_{i_k})^T \in
\mathds{Z}^k$, which belong to deleted rows of $P_J(A_{eq})$ and denote the new point by $g^\prime$.
We define the reduced problem induced by $J$ in the following way:
\begin{align*}
& \mbox{Find }z = (z_1, \ldots, z_p)^T \in \mathds{Z}^p, \\
& \mbox{such that }1 \le z_i \le n \mbox{ for } i = 1, \ldots, p, \\
& B_r z <> \mathbf{0} \mbox{ for }r=1, 2, 3 \mbox{ and } B_{eq} z = g^\prime.
\end{align*}

The reduced problem is built from the generalized Sudoku problem by dropping cells. Simultaneously,
all comparisons between dropped cells and between remaining cells and dropped cells are also dropped.
All givens in the dropped cells do not appear in the reduced problem. The generalized Sudoku 
problem and the reduced problem induced by $J$ are related.

\begin{lemma} \label{L:51}
(i) Let $x$ be a solution of the generalized Sudoku problem and let $J \subset \{1, \ldots, n^2\}$ 
be a set of cells. Then $P_J(x)$ is a solution of the reduced problem induced by $J$. \\
(ii) If $J=\{1, \ldots, n^2\}$, then the reduced problem induced by $J$ equals the 
generalized Sudoku problem.
\end{lemma}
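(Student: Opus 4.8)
The plan is to prove the two parts separately, with part (i) being the substantive claim and part (ii) following by essentially unwinding the definitions.

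For part (i), the goal is to verify that $z := P_J(x)$ satisfies each of the three defining conditions of the reduced problem. The bound condition $1 \le z_i \le n$ for $i = 1, \ldots, p$ is immediate, since each component $z_i$ equals some component $x_j$ with $j \in J$, and $x$ satisfies $1 \le x_j \le n$. For the equality condition $B_{eq} z = g'$, I would argue that $A_{eq} x = g$ holds by hypothesis, and then observe that projecting and deleting rows is compatible with this: each surviving row of $B_{eq}$ comes from a row of $P_J(A_{eq})$ that was not deleted (i.e.\ not a zero row), and its corresponding entry of $g'$ is the matching surviving entry of $g$. The key identity here is $P_J(m_l)^T z = m_l^T x$ whenever the relevant index $i_l$ lies in $J$, which follows from the component relation $P_J(x)_{t_J(j)} = x_j$ stated in the excerpt.

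The heart of the argument is the condition $B_r z <> \mathbf{0}$ for $r = 1,2,3$. I would exploit Lemma~\ref{L:34}, which characterizes $A_{\pi_r} x <> \mathbf{0}$ as the statement that the entries of $x$ indexed by each constraint set $cs_{\pi_r}(j)$ are distinct. A surviving row of $B_r$ corresponds to a row of $A(n)$ embedded in $A_{\pi_r}$ that encodes a comparison $x_a - x_b$ between two cells $a, b$ lying in the same constraint set, and this row survives the projection precisely when it retains at least two nonzero components, i.e.\ when both $a$ and $b$ lie in $J$. For such a row, $P_J$ preserves both nonzero entries, so the projected comparison reads $z_{t_J(a)} - z_{t_J(b)}$, which is nonzero exactly because $x_a \ne x_b$ (the original comparison was nonzero). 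Thus every surviving row of $B_r$ evaluates to a nonzero value on $z$, giving $B_r z <> \mathbf{0}$. The rows deleted because they have at most one nonzero component are exactly the comparisons where at least one of the two cells was dropped, and these carry no constraint on $z$ — which is why deleting them is both necessary and harmless.

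The main obstacle I anticipate is bookkeeping rather than conceptual: making precise the correspondence between rows of $A_{\pi_r}$ and pairwise comparisons within constraint sets, and tracking which rows survive projection under $P_J$. The inductive block structure of $A(n)$ means each row has exactly two nonzero entries ($+1$ and $-1$) recording one comparison, so a row survives the ``at most one nonzero component'' deletion iff both compared cells are in $J$; I would state this explicitly and lean on Lemma~\ref{L:32} to connect the matrix rows to distinctness. For part (ii), when $J = \{1, \ldots, n^2\}$ the projection $P_J$ is the identity, $t_J$ is the identity, and no row acquires a zero or singleton-support pattern that was not already present, so $B_r = A_{\pi_r}$, $B_{eq} = A_{eq}$, $g' = g$, and the reduced problem is literally the generalized Sudoku problem; this requires only checking that the deletion steps remove nothing new, which follows because every row of $A_{\pi_r}$ already has exactly two nonzero components and $A_{eq}$ has no zero rows.
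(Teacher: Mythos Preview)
Your proposal is correct and follows the same underlying idea as the paper: every constraint of the reduced problem is (after the bookkeeping you describe) literally a constraint of the generalized Sudoku problem, so a solution $x$ projects to a solution $P_J(x)$, and when $J=\{1,\ldots,n^2\}$ the projection is the identity. The paper's own proof is far terser---it dispatches (i) with the single sentence ``all constraints of the reduced problem induced by $J$ are also constraints of the generalized Sudoku problem'' and (ii) by noting $P_J$ is the identity---so your argument is essentially a careful unpacking of what the paper declares clear.
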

\begin{proof}
``(i)" This is clear, since all constraints of the reduced problem induced by $J$ are also constraints 
of the generalized Sudoku problem. \\
``(ii)" If $J=\{1, \ldots, n^2\}$, then $p=n^2$ and $P_J$ is the identical mapping.
Both problems are identical.
\end{proof}

Obviously, the converse of Lemma \ref{L:51} (i) is not true. If $P_J(x)$ is a solution of the 
reduced problem, there is no guarantee, that $x$ satisfies all constraints of the generalized 
Sudoku problem. 

There is an immediate consequence of Lemma \ref{L:51}. If the generalized Sudoku problem 
is solvable, then the reduced problem is also solvable. 

\begin{definition} \label{D:unicity_cell_wrt}
Let $1 \le p \le n^2$ be an integer, let $J \subset \{1, \ldots, n^2\}$ be a set of $p$ cells, let $i \in J$ 
and let $v \in \{1, \ldots, n\}$. The cell $i$ is called a unicity cell w.r.t. the base set $J$ with unique 
value $v$ if all solutions $x=(x_1, \ldots, x_p)^T$ of the reduced problem induced by $J$ satisfy 
$x_{t_J(i)} = v$.
\end{definition}

Note, the conditions in Definition \ref{D:unicity_cell_wrt} do not require the solvability of the reduced 
problem. We also do not impose any minimality condition on the base set. 

Using Lemma \ref{L:51} (ii), a cell $i$ is a unicity cell with unique value $v$ (see Definition 
\ref{D:unicity_cell}) if and only if $i$ is a unicity cell w.r.t. the base set $\{1, \ldots, n^2\}$ with unique 
value $v$. 
%If $i$ is a unicity cell w.r.t. some base set $J$ with some unique value $v$, then $i$ is also a 
%unicity cell w.r.t. each set $J^\prime$ containing $J$ with unique value $v$, i.e., $J^\prime$ is also a base set.

Mainly, we are interested in ``small" base sets $J$, i.e., with a small number of elements. A possible 
solution method could try to determine a base set in each step and to solve the reduced problem 
induced by it.

\begin{example} \label{E:51}
We consider the example depicted in Fig. \ref{Fig1} and the union $J$ of the sets 
$\{1, 2, 3, 10, 11, 12, 19, 20, 21 \}$ (the first block) and $\{4, 16, 28, 56\}$ 
(the cells populated with $4$'s). The cell $21$ (row 3, column 3) is a unicity cell w.r.t. $J$
with unique value $4$. All solutions of the reduced problem induced by $J$ 
contain a $4$ in the $11^{th}$ component (which corresponds to cell $21$).
\begin{figure}
\includegraphics{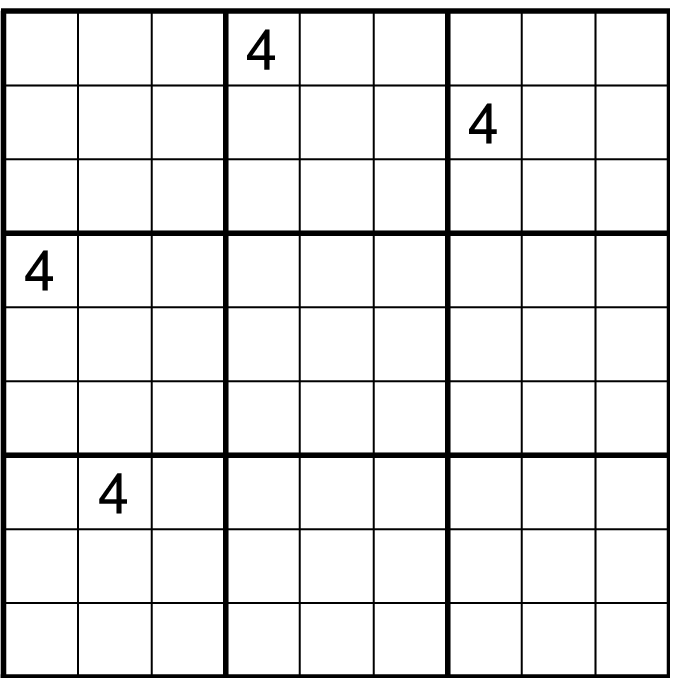}
\caption{Sudoku with a unicity cell} 
\label{Fig1}
\end{figure}
\end{example}

The relation between unicity cell and unicity cell w.r.t. a base set will be examined in the following theorem.

\begin{theorem} \label{T:51}
Let $i \in \{1, \ldots, n^2\}$ be a cell and let $v \in \{1, \ldots, n\}$. The following 
statements are equivalent: 
\begin{enumerate}[(i)] 
\item The cell $i$ is a unicity cell with unique value $v$. 
\item There exists a base set $J\subset\{1, \ldots, n^2\}$, such that $i\in J$  \\
and $i$ is a unicity cell w.r.t. $J$ with unique value $v$.
\end{enumerate}
\end{theorem}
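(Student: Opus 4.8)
The plan is to prove the two implications separately, using Lemma~\ref{L:51} as the sole nontrivial input, together with the component identity $P_J(x)_{t_J(i)} = x_i$ recorded after the definition of the projection $P_J$.

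For the implication (i) $\Rightarrow$ (ii) I would simply take the full base set $J = \{1, \ldots, n^2\}$. Then $i \in J$ is immediate, and by the remark following Lemma~\ref{L:51}, which is itself a consequence of Lemma~\ref{L:51}(ii), being a unicity cell with unique value $v$ is equivalent to being a unicity cell w.r.t.\ the base set $\{1, \ldots, n^2\}$ with unique value $v$. Thus the hypothesis of (i) directly hands us (ii) with this particular choice of $J$.

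For the converse (ii) $\Rightarrow$ (i), I would fix a witnessing base set $J$ with $i \in J$ and let $x$ be an arbitrary solution of the generalized Sudoku problem. By Lemma~\ref{L:51}(i), its projection $P_J(x)$ solves the reduced problem induced by $J$, so the unicity-cell hypothesis on $J$ forces $P_J(x)_{t_J(i)} = v$. Since $i \in J$, the identity $P_J(x)_{t_J(i)} = x_i$ yields $x_i = v$; as $x$ ranges over all solutions of the generalized Sudoku problem, this is precisely the statement that $i$ is a unicity cell with unique value $v$.

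I expect no genuine obstacle here: the argument amounts to transporting solutions along the projection $P_J$ in both directions, the forward transport being Lemma~\ref{L:51}(i) and the recovery of the original problem being Lemma~\ref{L:51}(ii). The only point requiring care is the index translation between the cell label $i \in \{1, \ldots, n^2\}$ and its position $t_J(i) \in \{1, \ldots, p\}$ in the reduced problem, and this bookkeeping is already absorbed into the stated relation $P_J(x)_{t_J(i)} = x_i$, so no additional computation is needed.
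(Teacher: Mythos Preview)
Your proof is correct and follows essentially the same route as the paper: both directions rely on Lemma~\ref{L:51} exactly as you indicate, with the forward direction taking $J=\{1,\ldots,n^2\}$ and the backward direction projecting an arbitrary solution via $P_J$ and invoking the identity $P_J(x)_{t_J(i)}=x_i$. The only cosmetic difference is that the paper phrases (ii)$\Rightarrow$(i) with two solutions $x,y$ and concludes $x_i=v=y_i$, whereas you take a single arbitrary solution; the content is identical.
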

\begin{proof}
``(i)$\Rightarrow$(ii)" Define $J=\{1, \ldots, n^2\}$. The claim follows from Lemma 
\ref{L:51} (ii). \\
``(ii)$\Rightarrow$(i)" Let $x=(x_1, \ldots, x_{n^2})^T$ and $y=(y_1, \ldots, y_{n^2})^T$ 
be solutions of the generalized Sudoku problem. By assumption (ii), there exists a base set 
$J\subset \{1, \ldots, n^2\}$, such that $i \in J$ and $i$ is a unicity cell w.r.t. $J$ with unique 
value $v$. We consider the reduced problem induced by $J$ and by Lemma \ref{L:51} (i), 
$P_J(x)$ and $P_J(y)$ are solutions of the reduced problem. Consequently, 
$x_i = P_J(x)_{t_J(i)} = v = P_J(y)_{t_J(i)}=y_i$.
\end{proof}

The strategy of humans to solve a uniquely solvable Sudoku is to find a cell $i$ and a ``pretty small" 
base set $J$. The preceeding theorem suggests, it is not necessary to find some cell $i$, but we can 
choose any cell $i$ and we still are able to find a base set $J$. But the price for selecting an arbitrary 
cell $i$ may be a large base set $J$, which results in a difficult to solve reduced problem. We illustrate 
this with the next example. In the next two examples we change the indexation to a more readable 
form $x_{row, column}$. 

\begin{example} \label{E:52}
\begin{figure}
\includegraphics{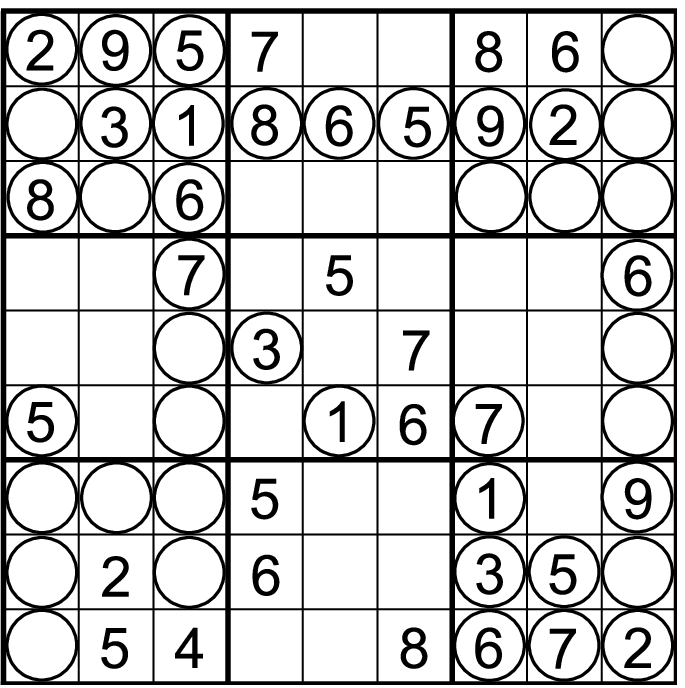}
\caption{The example of Mepham} 
\label{Fig2}
\end{figure}
We consider a continuation of the ``diabolical" example of Mepham \cite{Mep} depicted in Fig. 
\ref{Fig2}. The term continuation means, we added some values to the puzzle, which were easy 
to find. The cell $10$ (row 2, column 1) is a unicity cell w.r.t. the base set $J$ consisting of the circled 
cells with unique value $4$. This can be seen by the following argumentation. Let $x$ be a solution 
of the reduced problem induced by $J$. In cell $10$ there can be only a $4$ or a $7$. Suppose there 
is a $7$, then $ x_{3,2}=4$, $x_{2,9}=4$ and $x_{7,2}=7$ $\Rightarrow x_{3,9}=7 \Rightarrow 
x_{5,9}=5$ and $x_{8,9}=8 \Rightarrow x_{7,3}=8 \Rightarrow x_{6,3}=3 \Rightarrow 
x_{6,9}\ne 1$ and $x_{6,9} \ne 3$ and this is a contradiction.
\end{example}

This example of Mepham had been analyzed by Crook \cite{Cro}, too. Crook described an algorithm,
which combined the deduction of values with a trial-and-error-method.

Provan \cite{Pro} defined a ``pigeon-hole rule", which is equivalent to the preemptive sets of Crook.
The algorithm of Provan consists of the consecutive application of this rule to a solvable Sudoku puzzle.
If it is possible to apply this rule in each step, then the algorithm leads to a solution and the solution is 
unique \cite[Theorem 2]{Pro}.

\begin{example} \label{E:53}
We consider an example of Provan \cite[Table 2]{Pro} depicted in Fig. \ref{Fig3}. The cell $1$ 
(row 1, column 1) is a unicity cell w.r.t. the base set $J$ consisting of the circled cells with unique 
value $5$. This can be seen by the following argumentation. Let $x$ be a solution of the reduced 
problem induced by $J$. In cell $1$ there can be only a $1$ or $5$. Suppose there is a $1$, then 
$x_{2,2}=2$ and $x_{3,2}=5$ $\Rightarrow x_{8,8}\ne 2 \mbox{ and }\ne 5$. But this is a 
contradiction to the circled values in column 2 and row 8.
\begin{figure}
\includegraphics{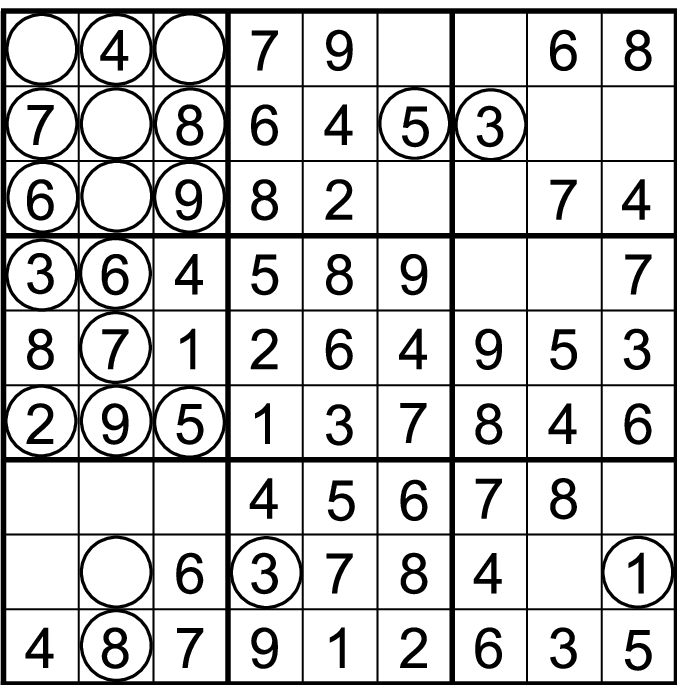}
\caption{The example of Provan} 
\label{Fig3}
\end{figure}
\end{example}

                                        %
                                        % Section 6
                                        % 
\section{Unicity by Rectangles} \label{S:unicity_rectangles}
In this section we introduce generalized rectangles, which are sets of cells with a rectangle-type shape
and show the relation to unicity.

\begin{definition}   \label{D:rectangle}
Let $1 \le p \le n$ and $1 \le q \le n$. A set $J \subset \{1, \ldots, n^2\}$ consisting of $p \cdot q$
cells is called a $p$-$q$-rectangle if there exist distinct indices $j_{r,1}, \ldots, j_{r,q}$, $1 \le j_{r,s} \le n$ 
for $s=1, \ldots, q$ and $r=1, 2, 3$, such that 
$\sharp (J \cap cs_{\pi_r}(j_{r,s})) = p$ for $s=1, \ldots, q$ and  $r=1, 2, 3$.
\end{definition}

Each single cell is a $1$-$1$-rectangle. The set of all cells $J = \{1, \ldots, n^2\}$ is an $n$-$n$-rectangle. 
In a classical Sudoku puzzle a set $J$ is a $p$-$q$-rectangle if and only if there exist $q$ rows,
$q$ columns and $q$ blocks and each of them contains $p$ elements of $J$.

\begin{example} \label{E:61}
In Fig. \ref{Fig4} the unpopulated cells form a $2$-$3$-rectangle. The constraint sets are rows 2, 4, 5,
columns 4, 5, 8 and blocks 2, 5, 6. The possible values in this rectangle are $6$ and $7$. We can 
allocate these values in row 2 in two different ways and the values of the remaining 
cells are uniquely determined. In particular this Sudoku puzzle admits two solutions.
\begin{figure}
\includegraphics{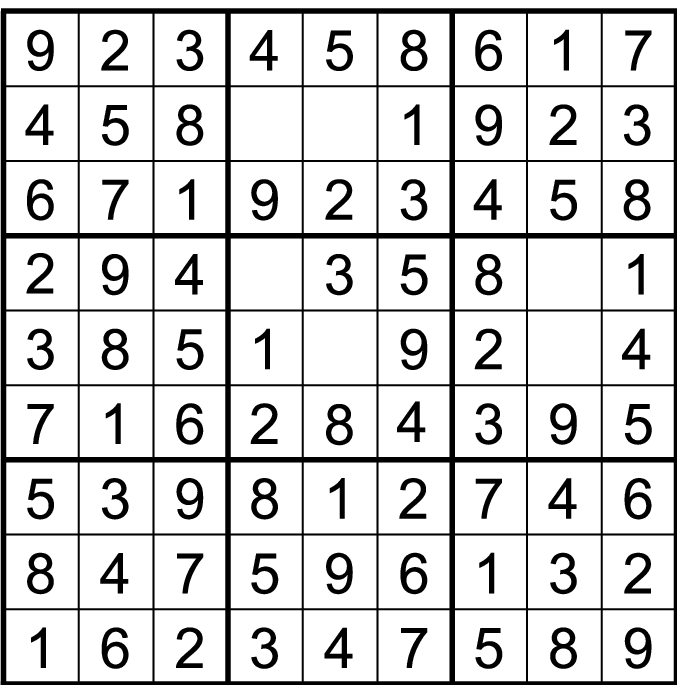}
\caption{A $2$-$3$-rectangle} 
\label{Fig4}
\end{figure}
\end{example}

If $x = (x_1, \ldots, x_{n^2})^T \in S(n, g)$ and $J$ is a $p$-$q$-rectangle, then the $p$ values $x_i$, 
where $i \in J \cap cs_{\pi_1}(j_{1,1})$ are distinct. Consequently, the whole set $\{x_i \mid i \in J\}$ 
contains at least $p$ values.

\begin{definition}
Let $x = (x_1, \ldots, x_{n^2})^T \in S(n, g)$ and let $J$ be a $p$-$q$-rectangle for some $1 \le p \le n$
and $1 \le q \le n$. The point $x$ is called minimal on $J$ if $\sharp \{x_i \mid i \in J\} = p$.
\end{definition}

Assume there exists a solution of the generalized Sudoku problem which is minimal on a $p$-$q$-rectangle 
with $p \ge 2$. Then it is possible to interchange two values in $cs_{\pi_1}(j_{1,s})$ for each $s$ and we 
obtain a new solution, i.e., the generalized Sudoku problem is not uniquely solvable.

\begin{theorem}    \label{T:61}
Let $x \in S(n, g)$ and let $J$ be a $p$-$q$-rectangle for some $2 \le p \le n$ and $1 \le q \le n$. 
Assume $J \cap \{i_1, \ldots, i_k\} = \emptyset$, i.e., $J$ does not contain any given. If $x$ is 
minimal on $J$, the generalized Sudoku problem admits more than one solution.
\end{theorem}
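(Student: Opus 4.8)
The plan is to produce a second solution $y \in S(n,g)$ with $y \neq x$ by permuting, inside the rectangle $J$, the $p$ values that $x$ takes there. Write $V = \{x_i \mid i \in J\}$, so that the minimality hypothesis is precisely $\sharp V = p$. The whole construction rests on the observation that, within each of the constraint sets meeting $J$, the cells of $J$ carry exactly the values of $V$, each once; permuting $V$ then relabels these cells without disturbing anything else.

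First I would record the structural consequences of Definition~\ref{D:rectangle}. Fix $r$. The indices $j_{r,1}, \ldots, j_{r,q}$ are distinct, so the constraint sets $cs_{\pi_r}(j_{r,1}), \ldots, cs_{\pi_r}(j_{r,q})$ are pairwise disjoint by Lemma~\ref{L:31}(i); each meets $J$ in exactly $p$ cells, and these $qp = \sharp J$ cells are distinct. Hence these $q$ intersections partition $J$, and in particular $J \cap cs_{\pi_r}(j) = \emptyset$ for every $j \notin \{j_{r,1}, \ldots, j_{r,q}\}$. Next, for each $r$ and $s$ the $p$ cells of $J \cap cs_{\pi_r}(j_{r,s})$ lie in a single constraint set of $\pi_r$, so their $x$-values are distinct by Lemma~\ref{L:34}; being $p$ distinct elements of the $p$-element set $V$, they exhaust it, giving $\{x_i \mid i \in J \cap cs_{\pi_r}(j_{r,s})\} = V$ for all $r$ and $s$.

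Then I would pick any transposition $\sigma$ of the set $V$, which exists since $\sharp V = p \ge 2$, and define $y$ by $y_i = \sigma(x_i)$ for $i \in J$ and $y_i = x_i$ otherwise. To verify $y \in S(n,g)$ I would check the constraints in turn. The range bound holds because $\sigma$ maps $V$ into $\{1, \ldots, n\}$, and the given condition holds because $J \cap \{i_1, \ldots, i_k\} = \emptyset$, so $y_{i_l} = x_{i_l} = g_{i_l}$. For $A_{\pi_r} y <> \mathbf{0}$ I would use Lemma~\ref{L:35} and show $\{y_i \mid i \in cs_{\pi_r}(j)\} = \{1, \ldots, n\}$ for every $j$, splitting into two cases. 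If $cs_{\pi_r}(j) \cap J = \emptyset$, then $y$ agrees with $x$ there and the claim is just Lemma~\ref{L:35} for $x$. If $j = j_{r,s}$, then by Lemma~\ref{L:35} the $x$-values on $cs_{\pi_r}(j_{r,s})$ are all of $\{1, \ldots, n\}$, consisting of $V$ on the $J$-cells and $\{1, \ldots, n\} \setminus V$ on the remaining $n - p$ cells; passing to $y$ leaves the latter untouched and replaces the former by $\sigma(V) = V$, so the value set is again $\{1, \ldots, n\}$. Finally $y \neq x$, since $\sigma$ moves some $v \in V$ and any cell $i \in J$ with $x_i = v$ satisfies $y_i = \sigma(v) \neq x_i$; thus $S(n,g)$ has at least two elements.

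The one step demanding care is the distinctness verification for all three families $\pi_1, \pi_2, \pi_3$ simultaneously. It is tempting to argue only for $\pi_1$, as the paragraph preceding the theorem does, but the single value-permutation $\sigma$ is compatible with the constraints of $\pi_2$ and $\pi_3$ only because the equality $\{x_i \mid i \in J \cap cs_{\pi_r}(j_{r,s})\} = V$ was established for every $r$, not just $r = 1$. That uniformity, which is exactly what minimality buys, is the crux of the argument.
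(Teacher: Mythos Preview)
Your proof is correct and follows essentially the same approach as the paper's: both swap two of the $p$ values inside $J$ to produce a second solution, invoking Lemma~\ref{L:35} to check the constraints. You are in fact a bit more explicit than the paper in justifying that $\{x_i \mid i \in J \cap cs_{\pi_r}(j_{r,s})\} = V$ for every $r,s$ and that $J$ meets no constraint set outside the listed ones, points the paper uses but does not spell out.
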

\begin{proof}
Let $x = (x_1, \ldots, x_{n^2})^T$ be a solution of the generalized Sudoku problem, which is minimal 
on the $p$-$q$-rectangle $J$. There exist distinct points $z_1, \ldots, z_p \in \{1, \ldots, n\}$, such that
\[
\{z_1, \ldots, z_p\} = \{x_i \mid i \in J\}= \{x_i \mid i \in J \cap cs_{\pi_r}(j_{r,s})\}
\]
for $s=1, \ldots, q$ and $r=1, 2, 3$. Since $p \ge 2$, there exist two
distinct points $p_1, p_2 \in \{1, \ldots, p\}$. We define
\[
x^\prime_i =
\begin{cases}
z_{p_2}, & \mbox{ if }i \in J \mbox{ and }x_i = z_{p_1}, \\
z_{p_1}, & \mbox{ if }i \in J \mbox{ and }x_i = z_{p_2}, \\
x_i, & \mbox{ if }i \in J, x_i \ne z_{p_1} \mbox{ and }x_i \ne z_{p_2}, \\
x_i, & \mbox{ if }i \in \{1, \ldots, n^2\} \backslash J
\end{cases}
\]
and a new point $x^\prime = (x^\prime_1, \ldots, x^\prime_{n^2})^T \in \mathds{Z}^{n^2}$
with $x^\prime \ne x$. By definition of $x^\prime$ 
\[
\{x_i^\prime \mid i \in J \cap cs_{\pi_r}(j_{r,s}) \} = \{ x_i \mid i \in J \cap cs_{\pi_r}(j_{r,s}) \}
\]
for $s=1, \ldots, q$ and $r=1, 2, 3$. Since $x^\prime_i = x_i$ for $i \notin J$,
\[
\{x_i^\prime \mid i \in cs_{\pi_r}(j) \} = \{x_i \mid i \in cs_{\pi_r}(j) \}
\]
for $j=1, \ldots, n$ and $r=1, 2, 3$ and $x_{i_l}^\prime = g_{i_l}$ for $l=1, \ldots, k$. Using Lemma 
\ref{L:35}, $x^\prime$ solves the generalized Sudoku problem, i.e., we have more than one solution.
\end{proof}

\begin{figure}
\includegraphics{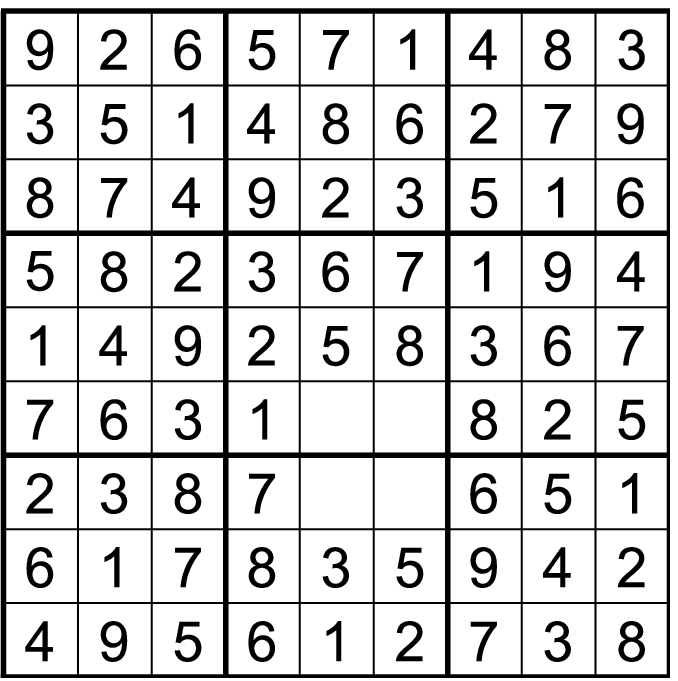}
\caption{The example of Herzberg and Murty} 
\label{Fig5}
\end{figure}

\begin{example} \label{E:62}
We consider an example of Herzberg and Murty \cite[Fig. 4]{HM} depicted in Fig. \ref{Fig5}. The point
$x$ with $(x_{50}, x_{51}, x_{59}, x_{60}) = (9, 4, 4, 9)$ is a solution. The set 
$J = \{50, 51, 59, 60\}$ is a $2$-$2$-rectangle, which does not contain a given and $x$ is minimal on $J$.
By Theorem \ref{T:61}, $x$ is not the only solution. Another solution is $y$ with 
$(y_{50}, y_{51}, y_{59}, y_{60}) = (4, 9, 9, 4)$.
\end{example}

The statement in Theorem \ref{T:61} provides a sufficient condition for multiple solutions. This can be 
reformulated as a necessary condition for unicity.

\begin{theorem}    \label{T:62}
Let $J$ be a $p$-$q$-rectangle for some $2 \le p \le n$ and $1 \le q \le n$. 
Let $x$ be the unique solution of the generalized Sudoku problem, such that $x$ is minimal on $J$.
Then $J$ contains a given, i.e., $J \cap \{i_1, \ldots, i_k\} \ne \emptyset$.
\end{theorem}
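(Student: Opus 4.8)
The plan is to observe that Theorem~\ref{T:62} is essentially the contrapositive of Theorem~\ref{T:61}, so the natural approach is a proof by contradiction that reduces directly to the already-established sufficient condition for multiple solutions. No fresh construction is needed; the task is purely to align the hypotheses of the two statements.

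First I would suppose, for contradiction, that $J$ contains no given, i.e., $J \cap \{i_1, \ldots, i_k\} = \emptyset$. The goal is then to verify that all four hypotheses of Theorem~\ref{T:61} are simultaneously satisfied. Since $x$ is assumed to be the unique solution of the generalized Sudoku problem, in particular $x \in S(n, g)$. By hypothesis $J$ is a $p$-$q$-rectangle with $2 \le p \le n$ and $1 \le q \le n$, and $x$ is minimal on $J$. The contradiction assumption supplies the remaining requirement, namely that $J \cap \{i_1, \ldots, i_k\} = \emptyset$. With every hypothesis of Theorem~\ref{T:61} in place, that theorem yields that the generalized Sudoku problem admits more than one solution, which contradicts the assumption that $x$ is the \emph{unique} solution. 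Hence the supposition is untenable, and we conclude $J \cap \{i_1, \ldots, i_k\} \ne \emptyset$, i.e., $J$ contains a given.

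Because this is a pure logical inversion of Theorem~\ref{T:61}, there is no genuine technical obstacle: the real work, constructing a second solution by interchanging the two values $z_{p_1}$ and $z_{p_2}$ across every relevant constraint set, has already been carried out in the proof of Theorem~\ref{T:61} and may be invoked as a black box. The only point requiring a moment of care, and the closest thing to a subtlety here, is confirming that the phrase ``unique solution'' in the hypothesis genuinely entails both the membership $x \in S(n, g)$ and the non-existence of a distinct second solution, so that the negation of the latter is exactly the conclusion ``more than one solution'' produced by Theorem~\ref{T:61}. Once this reading is fixed, the two theorem statements match cleanly and the contradiction is immediate.
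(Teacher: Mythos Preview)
Your proposal is correct and matches the paper's own treatment: the paper does not give a separate proof of Theorem~\ref{T:62} at all, but simply introduces it as the contrapositive reformulation of Theorem~\ref{T:61}. Your argument by contradiction, invoking Theorem~\ref{T:61} as a black box, is exactly this logical inversion and nothing more is needed.
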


The preceding theorem formalizes and generalizes an observation of Herzberg and Murty \cite[p. 712]{HM}, 
who stated ``If in the solution to a Sudoku puzzle, we have a configuration of a type indicated in
Figure 6 in the same vertical stack, then at least one of these entries must be included as a 'given' 
in the initial puzzle, for otherwise, we would have two possible solutions to the initial puzzle simply 
by interchanging $a$ and $b$ in the configuration.". The term ``Figure 6" refers to a figure in their 
paper, which is recovered in Fig. \ref{Fig5}, where the cells $50$, $51$, $59$ and $60$ contain the 
values $a$ respectively $b$.

                                        %
                                        % References
                                        % 

\end{document}